\newtheorem{theorem}{Theorem}[section]
\newtheorem{corollary}[theorem]{Corollary}
\newtheorem{proposition}[theorem]{Proposition}
\theoremstyle{definition}
\newtheorem{definition}[theorem]{Definition}
\theoremstyle{remark}
\newtheorem{remark}[theorem]{Remark}
\numberwithin{equation}{section}
\theoremstyle{definition}
\newcommand{\R} {\mathbb{R}}
\newcommand{\hn}{\mathcal{H}}
\newcommand{\rb}{(r_{\int})}
\newcommand{\A}{\mathcal{A}}
\newcommand{\ep}{\varepsilon}
\newcommand{\pa}{\partial}
\begin{document}

 \title[short text for running head]{full title}
\title{On the local geometry of definably stratified sets}

%    Only \author and \address are required; other information is
%    optional.  Remove any unused author tags.

%    author one information
% \author[short version for running head]{name for top of paper}
\author{David Trotman}
\address{Aix Marseille Univ., CNRS, Centrale Marseille, Institut Math\'ematique de Marseille, 13453 Marseille, France.}
\curraddr{}
\email{david.trotman@univ-amu.fr}
\thanks{Visits  to the Jagiellonian University in Krakow and the University of Provence in Marseille, during which this work developed, were supported by a PHC Polonium project.
The research began while both authors were resident at the Fields Institute in Toronto during the thematic program, ``O-minimal Structures and Real Analytic  Geometry". 
We gratefully acknowledge the support of the French Ministry of Foreign and European Affairs (MAEE), the French Ministry of Higher Education and Research (MESR), the Fields Institute and the French Embassy in Ottawa.}

%    author two information
\author{Guillaume Valette}
\address{Instytut Matematyczny PAN, ul. \'Sw. Tomasza 30, 31-027 Krak\'ow, Poland}
\curraddr{}
\email{gvalette@impan.pl}
\thanks{}

\subjclass[2000]{Primary }
%    The 2010 edition of the Mathematics Subject Classification is
%    now available.  If you are citing a classification from the
%    new scheme, use the following input coding instead.
\subjclass[2010]{Primary 03C64, 32S15, 53B25, 58A35; Secondary 32S60 }

\date{}

\begin{abstract}
We prove that a theorem of Paw\l ucki, showing that  Whitney regularity for a subanalytic
 set with a smooth singular locus of codimension one implies  the set is a finite union of $C^1$ manifolds with boundary,
 applies to definable sets in polynomially bounded o-minimal structures.
We give a refined version of Paw\l ucki's theorem for arbitrary
o-minimal structures, replacing Whitney (b)-regularity by a quantified version, and we prove related results concerning normal cones and continuity of the density.
We  analyse two counterexamples to the  extension of Paw\l ucki's
theorem to definable subsets in general o-minimal structures, and to several other statements valid for subanalytic sets. In particular we give the first example of a Whitney $(b)$-regular definably stratified set such that the density is not continuous along a stratum.
\end{abstract}

\maketitle

%    Text of article.

\section{Introduction}

We recall the statement of a striking and useful  theorem of W. Paw\l ucki.

 \medskip
 %{\bf Theorem 0.1. (Paw\l ucki  \cite{pa})}
 \begin{theorem} (Paw\l ucki  \cite{pa})
  {\it A subanalytic stratified set $S$ with a smooth singular locus $L$ of codimension $1$ is Whitney $(b)$-regular if and only if $S$ is a finite union of $C^1$ manifolds-with-boundary $S_1, \dots, S_k$ each of which has boundary $L$.}
 \end{theorem}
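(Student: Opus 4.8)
The plan is to treat the two implications separately, the reverse one being routine and the forward one carrying all the weight. For the implication ``$\Leftarrow$'' I would first record that a $C^1$ manifold-with-boundary is Whitney $(b)$-regular over its boundary: passing to a $C^1$ chart reduces this to the flat model $\R^{d}\times[0,\infty)$ stratified by $\R^{d}\times\{0\}$ and $\R^{d}\times(0,\infty)$, where conditions $(a)$ and $(b)$ hold trivially, and one then invokes the $C^1$-invariance of the Whitney conditions. Next, a finite union of $(b)$-regular pairs $(S_i^{\circ},L)$ is again $(b)$-regular, since any sequence in $\bigcup_i S_i^{\circ}$ converging to a point of $L$ has a subsequence lying in a single $S_i^{\circ}$, to which the pair's regularity applies. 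The real task is the forward implication: assuming $(S\setminus L,L)$ Whitney $(b)$-regular, with $\dim L=d$ and $\dim(S\setminus L)=d+1$, to produce the decomposition. As the statement is local over $L$, I would fix $a\in L$ and choose subanalytic $C^1$ coordinates in which a neighbourhood of $a$ in $L$ becomes $\R^{d}\times\{0\}\subset\R^{N}$.

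The second step exploits only the weak part of the hypothesis. Whitney $(a)$ forces every limit of tangent planes of $S\setminus L$ along $L$ to contain $\R^{d}\times\{0\}$, so the linear projection $p\colon\R^{N}\to\R^{d}$ restricts, on $(S\setminus L)\cap U$ for a small ball $U$ about $a$, to a submersion with one-dimensional fibres. Each fibre $p^{-1}(u)\cap(S\setminus L)\cap U$ is then a one-dimensional subanalytic manifold whose closure meets $L$ only at $(u,0)$ and is a finite union of half-open arcs issuing from $(u,0)$; using the uniform finiteness and cell-decomposition theorems for subanalytic sets I would shrink $U$ so that the number $k$ of arcs is constant and $(S\setminus L)\cap U$ splits as a disjoint union $\Sigma_{1}\sqcup\dots\sqcup\Sigma_{k}$ of subanalytic ``sheets'', each meeting every fibre of $p$ in exactly one arc. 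Since $(b)$-regularity passes to the pairs $(\Sigma_i,L)$, everything reduces to a single statement: a subanalytic sheet $\Sigma$ of this kind, Whitney $(b)$-regular over the smooth codimension-one stratum $L$, is such that $\Sigma\cup L$ is a $C^1$ manifold-with-boundary with boundary $L$.

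To prove this I would parametrise. Let $\gamma_{u}$ be the arclength parametrisation of $\overline{\Sigma\cap p^{-1}(u)}$ issuing from $(u,0)$, put $\psi(u,t)=\gamma_{u}(t)$ and $\psi(u,0)=(u,0)$. Since $p\circ\psi(u,t)=u$ one may write $\psi(u,t)=(u,w(u,t))$ with $w(\cdot,0)\equiv0$ and $|\partial_t w|\equiv1$, and for $t>0$ the map $\psi$ is smooth with injective differential. The theorem now comes down to showing that $w$ extends to a map of class $C^1$ on $\{t\ge0\}$: once that is known, $d\psi$ has full rank along $\{t=0\}$ as well — the vectors $\partial_{u_j}\psi=(e_j,\partial_{u_j}w)$ and $\partial_t\psi=(0,\partial_t w)$ are independent because $\partial_t w$ is a unit vector and hence not tangent to $L$ — so $\psi$ is a $C^1$ parametrisation realising $\Sigma\cup L$ as the required manifold-with-boundary. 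Here the hypotheses enter: each coordinate of $t\mapsto\partial_t w(u,t)$ is a bounded one-variable subanalytic function, so it has a one-sided limit $\nu(u)$ as $t\to0^+$; the secant part of condition $(b)$, applied with the base point $\pi_L(x)=(u,0)$ held fixed, forces $w(u,t)/|w(u,t)|\to\nu(u)$, whence the arclength normalisation gives $w(u,t)=t\,\nu(u)+o(t)$; and $(b)$ further yields continuity of $u\mapsto\nu(u)$. Subanalyticity then promotes each of these limits to a locally uniform estimate with a \L ojasiewicz exponent, and it is exactly this quantitative control — sharp enough to differentiate in $u$ and conclude $\partial_{u_j}w(u,t)\to0$ locally uniformly as $t\to0^+$, i.e.\ that the slope of $\Sigma$ transverse to the fibres is bounded by a power of the distance to $L$ — that I expect to be the crux of the whole argument, and the one place where subanalyticity (or, for the later refinement, polynomial boundedness) is genuinely indispensable. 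Extracting this $C^1$ information from the merely $C^0$-type statement that condition $(b)$ directly provides is the main obstacle. Once it is in hand, globalisation is comparatively routine: $k$ is locally constant along $L$ (finiteness together with semicontinuity of the number of branches), the local sheets patch consistently with no monodromy, and they assemble into finitely many $C^1$ manifolds-with-boundary $S_1,\dots,S_k$, each with boundary $L$ and union $S$.
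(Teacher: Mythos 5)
Your easy direction and the initial reductions (local coordinates along $L$, Whitney $(a)$ making the projection $p$ to $L$ a submersion on $S\setminus L$ near $L$, reduction to a single sheet, and the criterion that it suffices to extend $w$ to a $C^1$ map on $\{t\ge 0\}$) are fine. But the proof has a genuine gap, and it sits exactly at the point you yourself flag as ``the crux''. You assert that condition $(b)$ yields continuity of $u\mapsto\nu(u)$, the fibrewise limiting secant direction, and that subanalyticity is then only needed to upgrade pointwise limits to uniform, differentiable estimates. The first assertion is false as a consequence of $(b)$ alone: the example analysed in Section 4 of this paper, the closure $S_g$ of the graph of $g(x,z)=z^{x^2+1}$, is Whitney $(b)$-regular over the $x$-axis, yet its fibrewise limiting secant slope is $0$ for $x_0\neq 0$ and jumps to the whole segment of slopes $[0,1]$ at $x_0=0$ (failure of $(n)$ and normal pseudo-flatness, Property~1). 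So uniqueness and continuity of $\nu$ — equivalently conditions $(n)$ and $(npf)$ — are not ``$C^0$-type consequences of $(b)$'' to be sharpened later; they are the main content, and proving them is precisely where subanalyticity (or polynomial boundedness) must enter. In the paper this is done by a specific chain: $(b)\Rightarrow(r)$ over one-dimensional strata in the polynomially bounded case (Proposition 3.1, after Kuo), reduction of higher-dimensional $L$ to the one-dimensional case by slicing, $(r)\Rightarrow(r_{\int})\Rightarrow(npf)$ and $(n)$ (Theorem 3.7), and then a dimension-plus-connectedness argument (Proposition 3.8) showing that $(b)+(npf)$ forces a unique, continuously varying limiting secant and tangent plane, whence $Y\cup X$ is a graph with continuously extending derivative. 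Your proposal contains no substitute for this chain: ``subanalyticity promotes these limits to a locally uniform estimate with a \L ojasiewicz exponent, sharp enough to differentiate in $u$'' is not an argument — one-variable limits in $t$ at fixed $u$, even with \L ojasiewicz inequalities, give no control whatsoever on $\partial_{u_j}w$, which is the quantity you need to send to $0$, and condition $(b)$ never speaks about derivatives in the $L$-directions.

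Two secondary points are also glossed over and in fact depend on the same missing input: the claim that after shrinking $U$ the number $k$ of arcs per fibre is constant and the arcs organise into monodromy-free sheets each meeting every fibre exactly once is not a formal consequence of uniform finiteness; it follows from local topological triviality along $L$ (Thom--Mather, or in the paper's setting from $(a)+(r_{\int})$ via Proposition 3.6), i.e.\ again from regularity beyond raw $(b)$. And the step $w(u,t)=t\,\nu(u)+o(t)$ with the error uniform in $u$ — needed to differentiate the parametrisation at $t=0$ — is exactly a quantified secant condition of the type the paper isolates as $(r_{\int})$; without some such quantification the argument cannot close, as $S_g$ again shows.
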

 
 \medskip
 Paw\l ucki used this theorem to prove a version of Stokes' theorem for subanalytic sets, later generalised by \L ojasiewicz \cite{loj} and by Valette \cite{va4}.  It has also been used in various ways in papers of  Bernig \cite{be}, Chossat-Koenig \cite{ck}, Comte \cite{c}, Jeddi \cite{j}, Orro-Pelletier \cite{op}  and Valette \cite{va3}, among others.

% \medskip
%The first author conjectured this theorem in the semi-algebraic case during the oral defence of his French Doctorat d'Etat at Orsay in January 1980.

\medskip

It is natural to consider possible generalisations of Paw\l ucki's theorem to definable sets in o-minimal structures. We prove in section 3 that Paw\l ucki's theorem applies to definable sets in {\it polynomially bounded} o-minimal structures (Corollary 3.11).
Furthermore we give in Corollary 3.10 a refined version of Paw\l ucki's theorem which applies to arbitrary o-minimal structures, replacing Whitney $(b)$-regularity by a quantified version first described by the second author in his thesis \cite{va1} (directed by the first author). We also generalise a theorem of Hironaka \cite{h} stating that Whitney regularity of an analytic variety controls the normal cone structure, and a theorem of Comte \cite{c} stating that Kuo's ratio test ensures continuity of the density for subanalytic sets.

In section 4 we analyse two counterexamples to the natural generalisation of Paw\l ucki's theorem for definable subsets in  o-minimal structures which are not polynomially bounded. These also provide counterexamples to several classical results describing relations between well-known equisingularity conditions for subanalytic strata, including the above theorem of Hironaka and a result of Navarro Aznar and the first author \cite{nt}. We also  obtain the first example of a Whitney $(b)$-regular definable stratified set for which the density is not continuous along each stratum; note that the density is continuous along strata of a $(b)$-regular definable stratified set when the o-minimal structure is polynomially bounded \cite{va2}, \cite{nv}.

\medskip

%Visits  to the Jagiellonian University in Krakow and the University of Provence in Marseille, during which this work developed, were supported by a PHC Polonium project.
%The research began while both authors were resident at the Fields Institute in Toronto during the thematic program, ``O-minimal Structures and Real Analytic  Geometry". 
%We gratefully acknowledge the support of the French Ministry of Foreign and European Affairs (MAEE), the French Ministry of Higher Education and Research (MESR), the Fields Institute and the French Embassy in Ottawa.

\bigskip
\section{Definitions.}

%{\bf Definitions.}

 Let $k \geq 2$ be an integer. Let $S$ be a closed stratified subset of ${\bf R}^n$, such that the strata  are differentiable submanifolds of class $C^k$. For each stratum $X$ of $S$ denote by
$C_{X}S$ the {\bf normal cone of $S$ along $X$}, i.e. the restriction to $X$ of the
closure of the set $\{(x, \mu(x\pi(x))) : x \in S \setminus X \} \subset {\bf R}^n \times S^{n-1}$, where $\pi$ is
the local canonical retraction onto $X$ (recall that this is defined on a neighbourhood of a differentiable manifold of class $C^2$ by taking the unique nearest point on $X$), $\mu(x)$ is the unit vector $\frac{x}{ \vert\vert x \vert \vert }$,
 and $uv$ denotes the vector $v - u$, where $u$ and $v$ are points of ${\bf R}^n$. In fact $C_XS$
is a union of normal cones $C_X Y_i$, where the $\{Y_i \}$ are the strata of $S$
whose closures contain $X$. The following two properties of the normal cone express that $S$ behaves well along the stratum $X$ (at the point $x$).

\medskip
\begin{definition}
{\bf Condition $(n)$:} The fibre $(C_XS)_x$ of $C_XS$ at a point $x$ of $X$ equals the
tangent cone $C_x(S_x)$ to the fibre $S_x = S \cap \pi^{-1}(x)$ of $S$ at $x$.
\end{definition}

\begin{definition}
{\bf Normal pseudo-flatness $(npf)$  \cite{h}:}  The stratified set $S$ is said to be {\bf normally pseudo-flat along $X$} when the projection $p : C_XS \rightarrow X$ is open.
\end{definition}

When a stratification satisfies simultaneously two regularity conditions, say Whitney
$(a)$-regularity and $(n)$, we write that it is $(a+n)$-regular. Subanalytic
stratifications satisfying $(a+n)$ or $(npf)$ have normal cones with good
behaviour from the point of view of the dimension of the fibres. In fact
they satisfy the condition
$$dim(C_XS)_x \leq dim \, S - dim\, X - 1.  \quad\quad (*)$$

\noindent This is obvious for $(a + n)$, while for $(npf)$ it follows from \cite{ot2}. For differentiable stratifications one may not always be able to define the dimension.
Despite the bound $(*)$, the tangent cone $C_x(S_x)$ to the fibre
$S_x = S \cap \pi^{-1}(x)$ (hence the fibre $(C_XS)_x$ of the normal cone, assuming $(n)$)
can be quite arbitrary :  work of Ferrarotti,  Fortuna and Wilson \cite{ffw}
shows that each closed semi-algebraic cone of codimension $ \geq 1$ is realised
as the tangent cone at a point of a certain real algebraic variety,
while Kwieci\'nski and the first author showed that {\it every} closed cone is realised
as the tangent  cone at an isolated singularity of a certain $C^{\infty}$ $(b)$-regular
stratified set \cite{kt}.

 Hironaka proved in \cite{h} that  Whitney $(b)$-regular stratifications  of any analytic set
 (real or complex) satisfy condition $(n)$ and are normally pseudo-flat along
each stratum. Twenty years later  Henry et Merle \cite{hm2}
 obtained $ (n)$ with $S$ replaced
by $X \cup Y $ when $X$ and $Y$ are two adjacent strata of a sub-analytic
Whitney stratification of $X \cup Y $. In \cite{ot3}  Orro and the first author  introduced a metric invariant of Kuo's ratio test $(r)$ (introduced by  Kuo \cite{k}  in 1971), denoted by $(r^e)$, for $e \in [0,1]$.
Every $C^2$ $(w)$-regular stratification satisfies automatically $(a)$ and
$(r^e)$, i.e. $(a + r^e)$. Here as usual $(w)$ refers to the Kuo-Verdier condition \cite{ve} which implies trivially Kuo's ratio test $(r)$. Recall that $(r)$ implies Whitney $(b)$-regularity for subanalytic strata \cite{k}. In Proposition 3.2 below we show that $(r)$ implies $(b)$ for definable sets in any o-minimal structure, showing that $(w)$  also implies $(b)$. This is not the case for general stratified sets.
For subanalytic strata it was observed in \cite{ot3} that the combination $(a + r^e)$
is equivalent to Kuo's ratio test $(r)$, and the proof goes through without difficulty for definable stratifications in polynomially bounded o-minimal structures; by \cite{t1} we know that $(r)$
is strictly weaker than $(w)$ in the semi-algebraic case, and there are even
real algebraic examples (\cite{bt}, \cite{Paris7}, \cite{n}).

The equivalence of $(b), (r)$ and $(w)$
for complex analytic stratifications is a consequence of Teissier's programme of characterising $(b)$-regularity
  by equimultiplicity of polar varieties, completed in 1982
(\cite{te1}, \cite{hm1}). One can consult \cite{te2} and \cite{me} for detailed surveys of the complex equisingularity theory.

In \cite{ot2} it was proved that every $(a + r^e)$-regular stratification is
normally pseudo-flat and satisfies condition $(n)$, without subanalyticity. Hence for $(r)$-regular
stratifications which are definable in a polynomially bounded o-minimal
structure, $(n)$ and $(npf)$ hold. We shall show in section 4 that this is not the case for o-minimal structures which are not polynomially bounded.

\medskip
We recall below, for the convenience of the reader, the definitions of the conditions $(a)$ and $(b)$ of Whitney,
$(r)$ of Kuo \cite{k}, $(r^e)$ of Orro-Trotman and $(w)$ of Kuo-Verdier \cite{ve}. We also recall Teissier's notion of $(E^{\star})$ where $(E)$ is any equisingularity criterion.

Let $X$ and $Y$ be two submanifolds of $R^n$ such that $X \subset cl(Y)$ , and let
$\pi$ be a local retraction onto $X$. Following Hironaka \cite{h}, we denote by
$\alpha_{Y,X}(y)$ the distance of $T_yY$ to $T_{\pi(y)}X$, which is

$$\alpha_{Y,X}(y) = max \{< \mu(u), \mu (v) > : u \in N_yY \setminus \{0\}, v \in T_{\pi(y)}X \},$$

\noindent and we denote by $\beta_{Y,X}(y)$ the angle of $y\pi(y)$ to $T_yY$ expressed as

$$\beta_{Y,X}(y) = max \{< \mu(u), \mu(y\pi(y)) \} > : u \in N_yY \setminus \{0\} \},$$

\noindent where $<,>$  is the scalar product on $R^n$.
For $v \in R^n$, the distance of the vector $v$ to a plane $B$ is

$$\eta(v,B) = sup \{< v,n >: n \in B^{\perp}, \vert \vert n \vert \vert = 1\}.$$

Set

$$d(A,B) = sup \{ \eta(v,B) : v \in A, 	\vert \vert v \vert \vert	 = 1 \},$$

\noindent so that in particular $\alpha_{Y,X}(y) = d(T_{\pi(y)}X, T_yY )$.
Set also

$$R_{Y,X}(y) = \frac{\vert \vert y \vert \vert \, \alpha_{Y,X}(y)}{\vert \vert y \pi (y) \vert \vert } \quad {\rm and} \quad W_{Y,X}(y, x) = { \frac{d(T_xX, T_yY )} { \vert \vert yx \vert \vert }}.$$	

\bigskip

%{\bf Definition. }
\begin{definition}
 The pair of strata $(X, Y )$ satisfies, at $0 \in X$ :

{\it condition $(a)$} if, for $y$ in $Y$,
$$lim_{y \rightarrow 0} \alpha_{Y,X} (y) = 0,$$

{\it condition $(b^{\pi})$} if, for $y $ in $Y$,
$$lim_{y \rightarrow 0} \beta_{Y,X}(y) = 0.$$

{\it condition $(b)$} if, for $y$ in $Y$,
$$lim_{y \rightarrow 0} \alpha_{Y,X}(y) = lim_{y\rightarrow 0} \beta_{Y,X}(y) = 0,$$

{\it condition $(r)$} if, for $y$ in $Y$ ,
$$ lim_{y \rightarrow 0} R_{Y,X}(y) = 0,$$

{\it condition $(w)$} if, for $y$ in $Y$ and $x$ in $X$, $W_{Y,X}(y, x)$ is bounded near
$0$.
\end{definition}
\bigskip

In \cite{ot2} P. Orro and the first author introduced the following condition
of Kuo-Verdier type.

\medskip
\begin{definition} 
Let $e \in [0, 1)$. One says that $(X, Y )$ satisfies condition
$(r^e)$ at $0 \in X $ if, for $y \in Y$, the quantity

$$R^e (y) = \frac{ {\vert\vert \pi (y) \vert \vert }^e \alpha_{Y,X} (y) } {\vert\vert y \pi (y) \vert\vert}.$$

\noindent  is bounded near $0$.
\end{definition}

\medskip
This condition is a $C^2$ diffeomorphism invariant. It is Verdier's
condition $(w)$ when $e = 0$, hence $(w)$ implies $(r^e)$ for all $e \in [0, 1)$. But,
unlike $(w)$, condition $(r^e)$ when $e > 0$ does not imply condition $(a)$ : a
counter-example which is a semi-algebraic surface can be obtained by
pinching a half-plane $\{z \geq 0, y = 0 \}$ of ${\bf R}^3$, with boundary the $x$-axis
$X$ in a cuspidal region $\Gamma = \{x^2 + y^2 < z^p \}$, where $p$ is an odd
integer such that $p > {\frac{2}{e}}$ , such that in $\Gamma$ there are sequences tending to $0$ for which condition $(a)$ fails. Such an example will be $(r^e)$-regular.

\bigskip

\begin{theorem}
{\cite{ot3}}. Every $C^2$ $(a+r^e)$-regular stratification is normally
pseudo-flat and satisfies condition $(n)$.
\end{theorem}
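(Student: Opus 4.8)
The plan is to reduce to one adjacent pair of strata in linear coordinates, isolate a single technical lemma produced by a controlled flow, and read off $(n)$ and $(npf)$ from it. Since $C_XS=\bigcup_i C_XY_i$ over the strata $Y_i\supset X$, and both assertions can be tested on each $Y_i$ separately, it suffices to fix $X\subset\mathrm{cl}(Y)\setminus Y$, a point $0\in X$, and a small neighbourhood of $0$, and to prove that $(C_XS)_0=C_0(S_0)$ and that $p\colon C_XS\to X$ is open at every point of $p^{-1}(0)$. As $(a)$ and $(r^e)$ are $C^2$--diffeomorphism invariants, after a local $C^k$ change of coordinates I may assume $X=\R^d\times\{0\}$ and $\pi$ the orthogonal projection onto it; write $y^{\perp}:=y-\pi(y)$, so $\pi^{-1}(x)=x+(\{0\}\times\R^{\,n-d})$ and (up to the sign convention in its definition) $\mu(y^{\perp})$ is the direction occurring in $C_XS$. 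The inclusion $C_0(S_0)\subseteq(C_XS)_0$ is then automatic, since $z\in S_0$ has $\pi(z)=0$ and hence $\mu(z-\pi(z))=\mu(z)$.

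The heart of the proof is the following \emph{transfer lemma}: there exist $C>0$ and a neighbourhood $U$ of $0$ such that for every $y\in(S\setminus X)\cap U$ and every $x'\in X\cap U$ there is $z\in S$ with
$$\pi(z)=x',\qquad \tfrac12\|y^{\perp}\|\le\|z^{\perp}\|\le 2\|y^{\perp}\|,\qquad \big\|\mu(z^{\perp})-\mu(y^{\perp})\big\|\le C\,\big(\|\pi(y)\|+\|x'\|\big)^{1-e}.$$
Granting it, $(n)$ follows: given $(0,\lambda)\in C_XS$, take $y_j\in S\setminus X$ with $y_j\to 0$, $\pi(y_j)\to 0$, $\mu(y_j^{\perp})\to\lambda$; applying the lemma with $x'=0$ gives $z_j\in S_0$ with $\|z_j\|\le 2\|y_j^{\perp}\|\to 0$ and $\|\mu(z_j)-\mu(y_j^{\perp})\|\to 0$, so $\lambda\in C_0(S_0)$, and with the automatic reverse inclusion $(C_XS)_0=C_0(S_0)$. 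And $(npf)$ follows: given $(0,\lambda)$ realised by such $y_j$ and any sequence $x_i\to 0$ in $X$, apply the lemma to $y_i$ and $x'=x_i$; the resulting $z_i$ satisfies $\pi(z_i)=x_i$, $\|z_i-x_i\|=\|z_i^{\perp}\|\le 2\|y_i^{\perp}\|\to 0$ and $\|\mu(z_i^{\perp})-\mu(y_i^{\perp})\|\le C(\|\pi(y_i)\|+\|x_i\|)^{1-e}\to 0$, so $c_i:=(x_i,\mu(z_i^{\perp}))\in C_XS$ has $p(c_i)=x_i$ and $c_i\to(0,\lambda)$: this is precisely openness of $p$ at $(0,\lambda)$.

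To prove the transfer lemma, fix $y\in Y\cap U\setminus X$ and $x'$, doing the case $x'=0$ first (for general $x'$ one runs the same argument steering $\pi(\gamma)$ along a path in $X$ from $\pi(y)$ to $x'$ of length $O(\|\pi(y)\|+\|x'\|)$ that stays at distance $\ge\tfrac12\min(\|\pi(y)\|,\|x'\|)$ from $0$). Let $e(y):=-\mu(\pi(y))\in T_{\pi(y)}X=\R^d\times\{0\}$ and let $v(y):=\mathrm{pr}_{T_yY}e(y)$, a $C^{k-1}$ vector field of norm $\le 1$; condition $(a)$ gives $\|v(y)-e(y)\|=\eta(e(y),T_yY)\le\alpha_{Y,X}(y)$. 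Along the integral curve $\gamma$ of $v$ through $y$ one has, on $U$ small, $\frac{d}{dt}\|\pi(\gamma)\|=-1+O(\alpha_{Y,X}(\gamma))\in[-\tfrac32,-\tfrac12]$ and $\big\|\frac{d}{dt}\gamma^{\perp}\big\|\le\alpha_{Y,X}(\gamma)$; hence $\|\pi(\gamma(t))\|$ decreases to $0$ at some $t^{*}\le 2\|\pi(y)\|$ and (the speed being bounded) $\gamma$ extends continuously to $t^{*}$ with $\pi(\gamma(t^{*}))=0$. Now feed in $(r^e)$ in the form $\alpha_{Y,X}(y)\le C\|y^{\perp}\|\,\|\pi(y)\|^{-e}$ and change variable to $s=\|\pi(\gamma(t))\|$: since
$$\int_{0}^{\|\pi(y)\|}s^{-e}\,ds=\frac{\|\pi(y)\|^{1-e}}{1-e}$$
is finite \emph{exactly because $e<1$} --- the crux of the argument, and the reason a quantitative hypothesis is needed rather than merely $\alpha_{Y,X}(\gamma)\to0$ --- a Grönwall estimate on $\log\|\gamma^{\perp}\|$ keeps $\|\gamma(t)^{\perp}\|$ within a factor $2$ of $\|y^{\perp}\|$, and then $\|\gamma(t^{*})^{\perp}-y^{\perp}\|\le\int_0^{t^{*}}\alpha_{Y,X}(\gamma)\,dt=O(\|y^{\perp}\|\,\|\pi(y)\|^{1-e})$. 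Taking $z:=\gamma(t^{*})\in\mathrm{cl}(Y)\subset S$, one has $\pi(z)=0$, $z^{\perp}=z$ with $\|z\|$ comparable to $\|y^{\perp}\|$, and the elementary bound $\|\mu(a)-\mu(b)\|\le 2\|a-b\|/\max(\|a\|,\|b\|)$ yields $\|\mu(z)-\mu(y^{\perp})\|=O(\|\pi(y)\|^{1-e})$, as required; the general-$x'$ case only replaces $\|\pi(y)\|$ by $\|\pi(y)\|+\|x'\|$ throughout these estimates.

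The one genuine obstacle is the step I glossed over: $v$ lives on $Y$, so the curve $\gamma$ may leave $Y$ through a boundary stratum $Y'\subset\mathrm{cl}(Y)\setminus Y$ before $\pi(\gamma)$ reaches $x'$. This is exactly where one uses that the \emph{whole} stratification, not just a single pair, is $(a+r^e)$--regular: one restarts the flow inside the lower--dimensional stratum actually met --- all the estimates above are uniform, with a constant depending only on the $(a)$-- and $(r^e)$--bounds --- and finishes by induction on $\dim Y-\dim X$, the base case $\dim Y=\dim X+1$ being immediate from $(*)$, since then $(C_XS)_x$ is $0$--dimensional. One still checks that each restarted arc keeps $\|\pi(\gamma)\|$ decreasing and $\|\gamma^{\perp}\|$ comparable to $\|y^{\perp}\|$ (both are built into the two differential estimates), and notes that near $\pi^{-1}(0)$, where $e(y)=-\mu(\pi(y))$ degenerates, one still has $\|\pi(\gamma)\|>0$, so the vector field has no real singularity there; the remaining verifications are routine.
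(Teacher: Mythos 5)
Your route is essentially that of \cite{ot3} itself, and of its generalisation in Section 3 of this paper (Proposition \ref{p isotopie b} and Theorem \ref{thm_ppn}, where $s^{-e}$ is replaced by an arbitrary integrable $r(s)$; note $(r^e)$ with $e<1$ gives $r(t)\le Ct^{-e}$, hence $(r_{\int})$): lift the radial field of $X$ to a field tangent to the strata, use $(a)$ to make the lift almost horizontal, and use $(r^e)$ together with the integrability of $s^{-e}$ in a Gronwall estimate to control $\|\gamma^{\perp}\|$ and the drift of the secant direction; your ``transfer lemma'' is exactly the integral-curve estimate quoted in Section 3 as Lemma 3.4 of \cite{ot3}, existence of the curves being Lemma 3.3 of \cite{ot2}. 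The analytic core is right, and you correctly identify the integrability coming from $e<1$ as the crux.

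Two steps, however, are defective as written. First, the stratum-exit problem, which you rightly call the one genuine obstacle, is closed by an induction whose base case is a non sequitur: you appeal to $(*)$ and to the $0$-dimensionality of $(C_XS)_x$ when $\dim Y=\dim X+1$ and declare that case immediate. But $(*)$ is stated as a \emph{consequence} of $(a+n)$ or $(npf)$ (and only for subanalytic/definable strata), so it is not available, and a $0$-dimensional normal-cone fibre implies neither $(n)$ nor $(npf)$: the set $S_g$ of Section 4 has $\dim W=\dim X+1$ and both conditions fail there. The correct base case is that in a stratum of minimal dimension adjacent to $X$ the flow can only crash into $X$, which your lower bound $\|\gamma^{\perp}\|\ge\frac12\|y^{\perp}\|$ forbids; alternatively, as in \cite{ot2}, \cite{ot3} and Proposition \ref{p isotopie b}, build one stratified lift satisfying the estimate relative to every lower stratum and prove the integral-curve lemma once. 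Second, in the deduction of $(npf)$ the pair $c_i=(x_i,\mu(z_i^{\perp}))$ is not known to belong to $C_XS$: the point $z_i$ stays at distance comparable to $\|y_i^{\perp}\|$ from $X$, whereas $C_XS$ consists of limits of secant data taken as the points tend to $X$. You need a double limit: fix $x_i$, transport a whole sequence $y_j\to 0$ to points $z_{i,j}$ with $\pi(z_{i,j})=x_i$, let $j\to\infty$ and extract $\lambda_i\in (C_XS)_{x_i}$ with $\|\lambda_i-\lambda\|\lesssim \|x_i\|^{1-e}$; the uniformity of your transfer lemma makes this a two-line repair, but the membership claim as stated is false. (A smaller point: for general $x'$, a path of length $O(\|\pi(y)\|+\|x'\|)$ staying at distance $\ge\frac12\min(\|\pi(y)\|,\|x'\|)$ from $0$ does not by itself give the bound $(\|\pi(y)\|+\|x'\|)^{1-e}$; take instead the two radial segments through the origin, harmless precisely because $s^{-e}$ is integrable, or a radial-plus-spherical path.)
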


\begin{corollary}
 For $C^2$ $(r)$-regular stratifications which are definable in a
polynomially bounded o-minimal structure, $(n)$ and $(npf)$ hold.
\end{corollary}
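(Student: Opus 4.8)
The plan is to reduce the statement to the preceding theorem of Orro and Trotman (\cite{ot3}), which asserts that every $C^2$ $(a+r^e)$-regular stratification is normally pseudo-flat and satisfies condition $(n)$. Thus it suffices to prove that, in a polynomially bounded o-minimal structure, a $C^2$ $(r)$-regular stratification is automatically $(a+r^e)$-regular for a suitable $e\in[0,1)$. The conditions being local along strata and concerning pairs of adjacent strata, we fix a pair $(X,Y)$ with $X\subset cl(Y)$, a point of $X$ which we take to be the origin, and the local nearest-point retraction $\pi$ onto $X$ (well defined near $0$ since $X$ is of class $C^2$). All the data involved — $\pi$, the Gauss maps $y\mapsto T_yY$ and $x\mapsto T_xX$, and hence $\alpha_{Y,X}$, $R_{Y,X}$ and $R^e$ — are definable in the structure, so the o-minimal toolkit is available: the Monotonicity Theorem, curve selection, and, crucially, the \L ojasiewicz inequality valid in polynomially bounded structures.

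First, $(r)\Rightarrow(a)$ uses only the elementary bound $\|y\pi(y)\|=\mathrm{dist}(y,X)\le\|y-0\|=\|y\|$: it gives $\alpha_{Y,X}(y)=R_{Y,X}(y)\,\|y\pi(y)\|/\|y\|\le R_{Y,X}(y)$, which tends to $0$. Next, combining this with $\|\pi(y)\|\le\|\pi(y)-y\|+\|y\|\le 2\|y\|$ we get, for $y$ near $0$,
\[
R^{e}(y)=\frac{\|\pi(y)\|^{e}}{\|y\|}\,R_{Y,X}(y)\ \le\ 2^{e}\,\|y\|^{\,e-1}\,R_{Y,X}(y).
\]
Hence $(r^e)$ will follow as soon as $R_{Y,X}$ tends to $0$ at a polynomial rate: if $R_{Y,X}(y)\le C\|y\|^{\delta}$ near $0$ for some $C,\delta>0$, then any $e$ with $\max(0,1-\delta)\le e<1$ makes $R^{e}$ bounded near $0$. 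As the stratification is locally finite there are finitely many pairs, so a single $e$ can be chosen for all of them, and the stratification is $(a+r^e)$-regular.

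The key step — and the only place polynomial boundedness is used — is the \L ojasiewicz-type estimate $R_{Y,X}(y)\le C\|y\|^{\delta}$. Under $(r)$, the function $R_{Y,X}$ extends continuously by $0$ to the origin; setting $g(t)=\sup\{R_{Y,X}(y):y\in Y,\ \|y\|\le t\}$ yields a definable, non-decreasing one-variable function with $g(t)\to 0$ as $t\to 0^{+}$, and in a polynomially bounded o-minimal structure such a function satisfies $g(t)\le C t^{\delta}$ for some $C,\delta>0$ (equivalently, by the growth dichotomy, $s\mapsto 1/g(1/s)$ is eventually equivalent to $c\,s^{r}$ with $r>0$). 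This is precisely the point that breaks down in structures that are not polynomially bounded, where $R_{Y,X}$ may decay only logarithmically and no exponent $e<1$ will do — the mechanism behind the counterexamples of section~4. Feeding $(a)$ and $(r^e)$ into the theorem of \cite{ot3} now gives normal pseudo-flatness and condition $(n)$.

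I expect the only real obstacle to be this last, one-variable \L ojasiewicz step, together with checking that the equivalence $(r)\Leftrightarrow(a+r^e)$ of \cite{ot3} — proved there for subanalytic strata — carries over. One must be slightly careful because $R_{Y,X}$ is a priori continuous only on $Y$, not on $cl(Y)$, and is not subanalytic, so the clean route is to work with the definable one-variable majorant $g$ above; alternatively one argues along definable curves using curve selection and the fact that a definable one-variable function is, in a polynomially bounded structure, equivalent to $c\,t^{\rho}$ with $\rho$ in the field of exponents. Everything else is the elementary manipulation of the ratios displayed above.
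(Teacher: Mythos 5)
Your proposal follows exactly the paper's route: the paper obtains this corollary by noting (as observed in \cite{ot3} for subanalytic strata, with the proof ``going through without difficulty'' in the polynomially bounded definable case) that $(r)$ implies $(a+r^e)$ for some $e\in[0,1)$, and then applying Theorem 2.5 of \cite{ot3} that $(a+r^e)$-regularity yields $(n)$ and $(npf)$. Your Łojasiewicz-type argument via the one-variable definable majorant $g$ is a correct way of supplying the details the paper leaves implicit, so this is essentially the same proof.
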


\medskip
In section 4 we give a counterexample to the corollary for any non-polynomially bounded o-minimal structure.

\medskip
Now we recall the definition of $E^*$-regularity for $E$ an equisingularity
condition, as in \cite{ot1}. This notion came from the discussion of
B. Teissier in his 1974 Arcata lectures \cite{te}. Teissier stated that one
requirement for an equisingularity condition to be ÒgoodÓ is that it be
preserved after intersection with generic linear spaces containing a given
linear stratum. Various equisingularity conditions have been shown to
have this property, notably Whitney $(b)$-regularity for complex analytic
stratifications (\cite{te1}, \cite{hm1}), and Kuo's ratio test $(r)$, the Kuo-Verdier
condition $(w)$ \cite{nt}, and Mostowski's $(L)$-regularity for subanalytic stratifications \cite{jtv} .

\medskip

\begin{definition} 
Let $M$ be a $C^2$-manifold. Let $X$ be a $C^2$-submanifold
of $M$ and $x \in X$. Let $Y$ be a $C^2$-submanifold of $M$ such that $x \in Y$,
and $X \cap Y = \emptyset$. Let $E$ denote an equisingularity condition (for example $(b), (r)$  or $(w)$). Then $(X, Y )$ is said to be {\it $E_{{\rm cod } \, k}$-regular at $x$}
($0 < k < cod X$) if there is an open dense subset $U_k$ of the Grassmann
manifold of codimension $k$ subspaces of $T_xM$ containing $T_xX$ such that
if $W$ is a $C^2$ submanifold of $M$ with $X \subset W$ near $x$, and $T_xW \in U_k$,
then $W$ is transverse to $Y$ near $x$, and $(X, Y \cap W)$ is $E$-regular at $x$.
\end{definition}

\begin{definition} 
 $(X, Y)$ is said to be {\it $E^*$-regular at $x$} if $(X, Y)$ is $E_{{\rm cod}\, k}$-regular
for all $k, 0 < k < cod X$.
\end{definition}

\begin{theorem}
 {\cite{nt}} For subanalytic stratifications, $(r)$ implies $(r^*)$
and $(w)$ implies $(w^*)$.
\end{theorem}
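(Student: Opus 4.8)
The plan is to reduce the starred statement to two ingredients. Since $(r)$ implies $(b)$ (Proposition~3.2, or Kuo's original argument in the subanalytic case) and both $(b)$ and $(w)$ imply $(a)$, I would first note that in either case $(X,Y)$ is $(a)$-regular at $x$, which is what controls the way $Y$ approaches $X$. The argument then splits into: \emph{(i)} a \emph{comparison estimate} --- for any $C^2$ submanifold $W\supseteq X$ near $x$ that meets $Y$ transversally throughout some punctured neighbourhood of $x$ with a uniform lower bound on the intersection angle, the pair $(X,Y\cap W)$ satisfies $(r)$ (resp.\ $(w)$) whenever $(X,Y)$ does (and the conclusion is vacuous if $Y\cap W$ does not accumulate at $x$); and \emph{(ii)} a \emph{wing lemma} --- for each $k$ with $0<k<\operatorname{cod}X$ there is an open dense subset $U_k$ of the Grassmannian of codimension-$k$ subspaces of $T_xM$ containing $T_xX$ such that, for every $C^2$ wing $W\supseteq X$ near $x$ with $T_xW\in U_k$, either $Y\cap W$ does not accumulate at $x$ or $W$ meets $Y$ transversally near $x$ with a uniform angle bound. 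Part \emph{(i)} uses nothing about the o-minimal structure; part \emph{(ii)} is where subanalyticity is essential. Applying \emph{(i)} and \emph{(ii)} for all admissible $k$ yields $(r)\Rightarrow(r^*)$ and $(w)\Rightarrow(w^*)$.

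For \emph{(i)}, after translating so that $x=0$, fix such a $W$: then $Y\cap W$ is a $C^2$ submanifold near $0$ with $T_y(Y\cap W)=T_yY\cap T_yW$, and the angle between $T_yY$ and $T_yW$ is $\ge\delta_0>0$ for $y\in Y\cap W$ near $0$. Of the data entering $R_{Y,X}$ and $W_{Y,X}$, only the $\alpha$-distance changes when $Y$ is replaced by $Y\cap W$, since $\|y\pi(y)\|$ and $\|yx\|$ depend only on $y$ and $X$. I would then invoke the elementary fact that, for linear subspaces $P,Q$ meeting at angle $\ge\delta_0$ and a unit vector $v$,
$$\eta(v,P\cap Q)\le C(\delta_0)\bigl(\eta(v,P)+\eta(v,Q)\bigr),$$
applied with $P=T_yY$, $Q=T_yW$, $v$ running over unit vectors of $T_{\pi(y)}X$ (resp.\ of $T_xX$). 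Since $\pi(y)\in X\subseteq W$ gives $T_{\pi(y)}X\subseteq T_{\pi(y)}W$, and the map $p\mapsto T_pW$ of the $C^2$ manifold $W$ is locally Lipschitz, one gets $d(T_{\pi(y)}X,T_yW)\le d(T_{\pi(y)}W,T_yW)\le L\|y\pi(y)\|$ and likewise $d(T_xX,T_yW)\le L\|yx\|$. Taking suprema over $v$,
$$\alpha_{Y\cap W,X}(y)\le C(\delta_0)\bigl(\alpha_{Y,X}(y)+L\|y\pi(y)\|\bigr),\qquad d\bigl(T_xX,T_y(Y\cap W)\bigr)\le C(\delta_0)\bigl(d(T_xX,T_yY)+L\|yx\|\bigr),$$
so that $R_{Y\cap W,X}(y)\le C(\delta_0)\bigl(R_{Y,X}(y)+L\|y\|\bigr)$ and $W_{Y\cap W,X}(y,x)\le C(\delta_0)\bigl(W_{Y,X}(y,x)+L\bigr)$. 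If $(X,Y)$ is $(r)$-regular at $0$ the first right-hand side tends to $0$ as $y\to 0$; if $(X,Y)$ is $(w)$-regular at $0$ the second stays bounded. Hence $(X,Y\cap W)$ inherits the condition.

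For \emph{(ii)}, I would first realise every codimension-$k$ subspace $P\supseteq T_xX$ as some $T_xW$: locally $X=\{f=0\}$ with $f$ a $C^2$ submersion, and $W=\{\lambda\circ f=0\}$ works for a surjective linear $\lambda$ (varying in a $C^2$ family over $P$). It then has to be shown that, for $P$ outside a nowhere-dense set, whenever $Y\cap W$ accumulates at $x$ the intersection is transverse near $x$ with a uniform angle bound --- the uniformity being automatic once pointwise transversality holds along $Y\cap W$ near $x$, by passing to convergent subsequences $T_{y_i}Y\to\nu$ and $T_{y_i}W\to T_xW$. The obstruction is governed by the limiting tangent planes of $Y$ along sequences in $Y\cap W$ converging to $x$; subanalyticity makes the incidence set $\{(y,P): y\in Y\cap W_P,\ T_yW_P+T_yY\ne T_yM\}$ subanalytic, and then the curve-selection lemma together with a \L ojasiewicz inequality --- or a dimension count exploiting that subanalyticity bounds the dimension of the family of limiting tangent planes of $Y$ --- confines the bad directions $P$ to a nowhere-dense set, whose complement I take for $U_k$. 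The hard part will be precisely this step: making rigorous, and genuinely subanalytic, the claim that a generic wing stays uniformly transverse to $Y$ wherever $Y\cap W$ approaches $x$. It is also exactly where subanalyticity cannot be weakened to arbitrary o-minimality --- in a non-polynomially-bounded structure the limiting tangent planes of $Y$ along $X$ can be too large for any open dense $U_k$ to exist --- which is consistent with the counterexamples announced for Section~4.
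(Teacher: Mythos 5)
The paper itself gives no proof of this statement: Theorem 2.9 is quoted from Navarro Aznar and the first author \cite{nt}, so your attempt has to be measured against that original argument. Your step \emph{(i)} is correct but is the easy half: under a uniform transversality bound the linear-algebra inequality $\eta(v,P\cap Q)\le C(\delta_0)\bigl(\eta(v,P)+\eta(v,Q)\bigr)$ (with the angle taken modulo $P\cap Q$, i.e. a uniform transversality constant, not the naive angle between the two planes, which is zero as soon as they meet), together with $T_{\pi(y)}X\subset T_{\pi(y)}W$ and Lipschitz variation of $p\mapsto T_pW$ on the $C^2$ wing, does transfer $(r)$ and $(w)$ to $(X,Y\cap W)$. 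The entire content of the theorem, however, is your step \emph{(ii)}, which you explicitly defer, and the mechanisms you propose for it do not work as stated.

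Concretely, the dimension count fails: writing $\ell=\dim X$, $d=\dim Y$, the fibre over $x$ of the Nash modification of $Y$ can have dimension $d-1$, while for a fixed limit plane $\tau\supset T_xX$ the locus of codimension-$k$ planes $P\supset T_xX$ not transverse to $\tau$ is a Schubert variety of codimension only $d-\ell-k+1$ in the sub-Grassmannian; the union of bad directions is therefore forced to be nowhere dense only when $\ell+k\le 1$. Worse, ``transverse to every limiting tangent plane'' is the wrong condition: take $X=\mathbb{R}\times\{0\}\subset\mathbb{R}^4$ and $Y=\mathbb{R}\times C$ with $C$ the punctured cone $z^2=x^2+y^2$, $z>0$, in $\mathbb{R}^3$. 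This pair is semialgebraic and $(w)$-regular, yet for $k=2$ the set of planes $P\supset T_0X$ transverse to all limiting tangent planes of $Y$ at $0$ is \emph{not} dense (its complement contains every direction lying in some tangent plane of the cone, a set with nonempty interior); an open dense $U_2$ exists only because for most such directions $Y\cap W$ does not accumulate at $0$ at all. So the dichotomy in your \emph{(ii)} cannot be obtained from a count on the family of limiting planes alone: one must relate the wing direction to which limits of $T_yY$ actually occur along $Y\cap W$ near $x$, and this is exactly where \cite{nt} uses the $(r)$/$(w)$ hypothesis together with subanalyticity (curve selection, incidence and wing arguments) in an essential way. Note also that your compactness argument for the uniform angle bound itself presupposes transversality of $T_xW$ to those same limit planes, so it cannot be detached from the missing step. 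In short: \emph{(i)} is fine, but the theorem's substance — producing the open dense $U_k$ with the transversality/empty-intersection dichotomy — is not established by the proposal.
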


\medskip
In this sub-analytic case, because $(b)$ implies $(r)$ over $1$-dimensional strata, and $(r)$ always implies $(b)$, we deduce the following corollary.

\medskip

\begin{corollary}
 {\cite{nt}} For subanalytic $(b)$-regular stratifications, $(b^*)$ holds
over every $1$-dimensional stratum.
\end{corollary}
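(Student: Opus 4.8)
The plan is to obtain this as a short formal consequence of the theorem of Navarro Aznar and the first author (\cite{nt}) just stated, combined with the two facts recalled in the text: in the subanalytic setting, Whitney $(b)$-regularity implies Kuo's ratio test $(r)$ over a $1$-dimensional stratum, and $(r)$ always implies $(b)$. No new computation should be required; the content lies in dovetailing these with the definitions of $E_{{\rm cod}\,k}$- and $E^*$-regularity.

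First I would fix a $1$-dimensional stratum $X$, a stratum $Y$ with $X\subset cl(Y)$, and a point $x\in X$. Since $\dim X=1$, the $(b)$-regularity of $(X,Y)$ at $x$ forces $(X,Y)$ to be $(r)$-regular at $x$. Invoking \cite{nt}, a subanalytic $(r)$-regular pair is $(r^*)$-regular, so $(X,Y)$ is $(r)_{{\rm cod}\,k}$-regular at $x$ for every $k$ with $0<k<{\rm cod}\,X$. Unwinding the definition, for each such $k$ there is an open dense subset $U_k$ of the Grassmannian of codimension-$k$ subspaces of $T_xM$ containing $T_xX$ such that for every $C^2$ submanifold $W$ with $X\subset W$ near $x$ and $T_xW\in U_k$, the manifold $W$ is transverse to $Y$ near $x$ and $(X,Y\cap W)$ is $(r)$-regular at $x$.

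Next I would upgrade $(r)$ to $(b)$ on each such section: inside $W$ the stratum $X$ is still $1$-dimensional and $Y\cap W$ is a $C^2$ submanifold, so $(r)$-regularity of $(X,Y\cap W)$ at $x$ should yield $(b)$-regularity of $(X,Y\cap W)$ at $x$. Granting this, the same open dense sets $U_k$ witness $(b)_{{\rm cod}\,k}$-regularity of $(X,Y)$ at $x$ for all admissible $k$, so $(X,Y)$ is $(b^*)$-regular at $x$; since $x\in X$ was arbitrary, $(b^*)$ holds over $X$.

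The step I expect to be the main obstacle is exactly this last upgrade. The definition of $(b)_{{\rm cod}\,k}$-regularity quantifies over all $C^2$ submanifolds $W$, and for such $W$ the section $Y\cap W$ need not be subanalytic, so Kuo's implication $(r)\Rightarrow(b)$, which passes through the curve selection lemma, does not apply verbatim. I would handle this by working inside the proof of the theorem of \cite{nt} rather than treating it as a black box: that proof already controls the generic $C^2$ sections $W$, and over a $1$-dimensional stratum the quantities measured by $(r)$ and by $(b)$ are governed together, so one reads off directly that the generic section of a subanalytic $(r)$-regular pair over a $1$-dimensional stratum is $(b)$-regular and not merely $(r)$-regular. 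One may also reduce the verification, using that $(b)$- and $(r)$-regularity at $x$ are $C^2$-diffeomorphism invariants, to the normal form in which $X$ and $W$ are coordinate subspaces. With that in hand the rest is routine bookkeeping with the definitions.
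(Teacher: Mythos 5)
Your chain of implications is exactly the paper's own deduction of this corollary: in the subanalytic case $(b)$ implies $(r)$ over a $1$-dimensional stratum, the theorem of \cite{nt} recalled just before upgrades $(r)$ to $(r^*)$, and $(r)$ on each generic section is converted back to $(b)$, giving $(b^*)$. The caveat you flag --- that $Y\cap W$ need not be subanalytic for an arbitrary $C^2$ wing $W$, so Kuo's $(r)\Rightarrow(b)$ is not automatic for the sections (indeed the paper's own topologist's-sine-curve remark shows the implication fails without definability) --- is simply passed over by the paper, which asserts that ``$(r)$ always implies $(b)$''; your proposed remedy of extracting $(b)$ for generic sections from the genericity analysis inside \cite{nt}, to which the corollary is attributed, is consistent with the intended justification, so your proposal matches the paper's argument.
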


\medskip
In section 4 we give counterexamples to the above Theorem and Corollary in any non-polynomially bounded o-minimal structure.

\bigskip

\section{Normal pseudo-flatness in the non-polynomially bounded case}

In this section we  give several  theorems indicating for which o-minimal structures the definably stratified sets have similar properties to sub-analytic sets, thus avoiding the pathologies exhibited by the functions $f$ and $g$ described in section 4. These theorems are based on earlier work of the second author in  \cite{va1}.

 In the final section of this paper we shall show that condition $(r)$ is too weak  to ensure normal pseudo-flatness in non-polynomially bounded o-minimal structures. In this section we explain how to overcome this problem: we introduce a slightly stronger condition which is enough to entail normal pseudo-flatness. We will also show that the density is continuous along the strata of a stratification satisfying this condition.

We prove in the following proposition (3.1) that one can extend the result of Kuo \cite{k} for semianalytic stratifications that $(b)$ implies $(r)$ along strata of dimension one (extended by Verdier \cite{ve} to subanalytic stratifications), to all stratified sets definable in {\it polynomially bounded} o-minimal structures. The result of Proposition 3.1 is sharp: the example in section 4 below is the first case known of an o-minimal structure with a $(b)$-regular definable stratification which does not satisfy Kuo's ratio test $(r)$. Moreover by Miller's dichotomy \cite{m} every non polynomially bounded o-minimal structure has such an example.

Recall that Kuo's ratio test $(r)$ holds for $(W,X)$ at $(0,0,0)$ if and only if
  $$R(x,y,z) = {\frac{\vert(x,y,z)\vert.
\delta(T_{(x,0,0)}X,T_{(x,y,z)}W)}{\vert(y,z)\vert}} \rightarrow 0$$

\noindent as $(x,y,z)$ tends to $(0,0,0)$ on $W$. For $E$ and $F$ vector subspaces of $\R^n$, the function $\delta$ is defined by:
$$\delta(E,F):=\sup_{|u|=1, u\in E} d(u,F).$$

\begin{proposition}
Let $X$ and $Y$ be $C^2$ disjoint submanifolds which are definable in a polynomially bounded o-minimal structure with $\dim X=1$, and $0 \in X \cap cl(Y)$. If the pair $(Y,X)$ satisfies  the Whitney $(b)$ condition then it also fulfills Kuo's ratio test $(r)$.
\end{proposition}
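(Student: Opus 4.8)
The plan is to argue by contradiction along a definable curve, arranging matters so that the rate estimate defining Kuo's ratio reduces to a statement about the asymptotics of a single definable arc; this is the point at which polynomial boundedness is indispensable.

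\textbf{Reduction to a curve.} After a $C^2$ change of coordinates (under which $(b)$ and $(r)$ are invariant, as is the independence of the choice of a $C^2$ retraction) I would assume $X$ is an open piece of $\R\times\{0\}\subset\R^n$ and $\pi(x,z)=(x,0)$. If $(r)$ fails, the definable set $\{y\in Y: R_{Y,X}(y)\geq\varepsilon_0\}$ has $0$ in its closure for some $\varepsilon_0>0$, and the curve selection lemma yields a definable $C^1$ arc $\gamma(t)=(x(t),z(t))\in Y$ with $\gamma(t)\to0$ and $R_{Y,X}(\gamma(t))\geq\varepsilon_0$. Condition $(a)$, which is part of $(b)$, gives $\alpha_{Y,X}(\gamma(t))\to0$, hence $|\gamma(t)|/|z(t)|\to\infty$: the arc is tangent to $X$. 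In particular $x$ is not identically $0$, so by the monotonicity theorem I may reparametrise by $u=x(t)$, obtaining a definable $C^1$ arc $\gamma(u)=(u,\zeta(u))\in Y$ with $\zeta(u)\to0$, $|\zeta(u)|=o(u)$ (whence $\zeta'(u)\to0$ and $|\gamma(u)|\sim u$), and $R_{Y,X}(\gamma(u))\geq\varepsilon_0$.

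\textbf{Reading $\alpha$ and $\beta$ off the arc.} Here is where $\dim X=1$ is used. Since $\gamma'(u)=(1,\zeta'(u))$ has nonzero first coordinate, $\pi|_Y$ is a submersion at $\gamma(u)$ and $T_{\gamma(u)}Y=V_u\oplus\R\gamma'(u)$, where $V_u:=T_{\gamma(u)}(Y\cap\pi^{-1}(u))\subset\{0\}\times\R^{n-1}$; write $V_u'\subset\R^{n-1}$ for its image. As $T_{\pi(\gamma(u))}X=\R e_1$ is a single line, $\alpha_{Y,X}(\gamma(u))$ is the distance from $e_1$ to $T_{\gamma(u)}Y$, and a direct computation with this splitting gives $\alpha_{Y,X}(\gamma(u))=|q|/\sqrt{1+|q|^2}$, where $q$ is the component of $\zeta'(u)$ orthogonal to $V_u'$; thus $\alpha_{Y,X}(\gamma(u))\sim|q|$ as $u\to0$. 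Moreover the secant $\gamma(u)\pi(\gamma(u))=(0,-\zeta(u))$ is horizontal, so $\beta_{Y,X}(\gamma(u))$ equals the distance from the unit vector $(0,-\zeta(u)/|\zeta(u)|)$ to $T_{\gamma(u)}Y$; by $(b)$ this tends to $0$, and comparing first coordinates in the splitting forces $\mathrm{dist}\big(\zeta(u)/|\zeta(u)|,V_u'\big)\to0$.

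\textbf{Polynomial boundedness and the contradiction.} In a polynomially bounded structure each nonzero component $\zeta_j$ has a genuine power expansion $\zeta_j(u)=c_ju^{p_j}(1+o(1))$; with $p=\min_j p_j$ this yields $\zeta(u)=u^p(\omega+o(1))$ for a fixed nonzero vector $\omega$, and differentiating term by term — a l'Hôpital-type fact valid here — gives $\zeta'(u)=u^{p-1}(p\,\omega+o(1))$. Consequently $\zeta(u)/|\zeta(u)|$ and $\zeta'(u)/|\zeta'(u)|$ tend to the same unit vector $\omega/|\omega|$. Combining this with the previous paragraph, $\mathrm{dist}(\omega/|\omega|,V_u')\to0$, hence $\mathrm{dist}\big(\zeta'(u)/|\zeta'(u)|,V_u'\big)\to0$ as well, i.e. $q=o(|\zeta'(u)|)=o(u^{p-1})$, so $\alpha_{Y,X}(\gamma(u))=o(u^{p-1})$. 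Since $|\gamma(u)|\sim u$ and $|\zeta(u)|\sim|\omega|\,u^{p}$,
$$R_{Y,X}(\gamma(u))=\frac{|\gamma(u)|\,\alpha_{Y,X}(\gamma(u))}{|\zeta(u)|}\longrightarrow0,$$
contradicting $R_{Y,X}(\gamma(u))\geq\varepsilon_0$; hence $(Y,X)$ satisfies $(r)$.

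\textbf{The main obstacle.} The hard part is exactly the assertion that $\zeta(u)/|\zeta(u)|$ and $\zeta'(u)/|\zeta'(u)|$ have a common limit: in a structure that is not polynomially bounded the velocity direction of a definable arc may drift away from its direction of approach, so $(b^\pi)$ no longer forces $\zeta'(u)$ to be nearly tangent to the fibre and $\alpha_{Y,X}$ need not be $o(u^{p-1})$ — which is precisely the mechanism behind the example of section 4. I also expect the reduction in the second step, where $\alpha_{Y,X}$ is detected by the arc's own velocity, to genuinely require $\dim X=1$: for higher-dimensional $X$, $\alpha_{Y,X}$ is the distance between two positive-dimensional tangent planes and cannot be controlled by a single arc.
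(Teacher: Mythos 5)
Your argument is correct, and at its core it runs parallel to the paper's proof: reduce to a single definable arc (you by curve selection and contradiction, the paper directly along every definable arc parametrized by arclength), use condition $(b)$ to control the secant direction, transfer that control to the vertical part of the velocity by a definability argument, and convert the resulting angle estimate into smallness of the Kuo ratio via a comparison between $|\zeta(u)|$ and $u\,|\zeta'(u)|$. The packaging differs: the paper works with the orthogonal projection $P_t$ onto $T_{\gamma(t)}Y^{\perp}$ and the identity $P_t(\gamma'(t))=0$, whereas you split $T_{\gamma(u)}Y=V_u\oplus\mathbb{R}\gamma'(u)$ along the fibre tangent and bound $\alpha$ by the component $q$ of $\zeta'(u)$ orthogonal to $V_u'$ (this is where $\dim X=1$ enters for both proofs, since $\alpha$ is then the distance of one unit vector to $T_yY$); and you extract both definability inputs at once from the expansions $\zeta_j(u)=c_ju^{p_j}(1+o(1))$, while the paper isolates them as two separate facts, namely the L'Hospital identity $\lim\frac{\gamma-\pi(\gamma)}{|\gamma-\pi(\gamma)|}=\lim\frac{\gamma'-\pi(\gamma')}{|\gamma'-\pi(\gamma')|}$ and the inequality $\frac{|\gamma(t)-\pi(\gamma(t))|}{t}\geq\varepsilon\,|\gamma'(t)-\pi(\gamma'(t))|$.

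One conceptual correction, which does not affect the validity of your proof but does affect your closing diagnosis: the common limit of $\zeta/|\zeta|$ and $\zeta'/|\zeta'|$ is \emph{not} where polynomial boundedness is needed. That statement holds for definable arcs in every o-minimal structure (it is exactly the paper's L'Hospital step, proved from monotonicity alone). What genuinely requires polynomial boundedness --- and what your power expansions deliver silently --- is the size comparison $u\,|\zeta'(u)|\lesssim|\zeta(u)|$, i.e.\ the paper's inequality quoted above, which fails for flat arcs: for $\zeta(u)=e^{-C/u^2}$ one has $u\,\zeta'(u)/\zeta(u)=2C/u^2\to\infty$. This is precisely the mechanism in the section 4 example: along the curves $z=\exp(-C/x^2)$ on $S_g$ the velocity direction does converge to the secant direction, yet $(r)$ fails because the ratio $u|\zeta'|/|\zeta|$ blows up, so your "main obstacle" paragraph misattributes the role of the hypothesis even though the proof itself uses it in the right place.
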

\begin{proof}
Take such $(Y,X)$ satisfying Whitney's condition $(b)$ at $0 \in X$. After a local change of coordinates we may assume that $X$ is a line. We have to show that the Kuo ratio $R$ tends to zero along any definable arc $\gamma:(0,\ep)\to Y$ with $\gamma(0)=0$. We may assume that $\gamma$ is parameterized by arc-length.   We denote by $P_{t}$ the orthogonal projection onto $T_{\gamma(t)}Y^{\perp}$, and by $\pi$ the orthogonal projection onto $X$.  As $(Y,X)$ is Whitney $(b)$ regular:
$$
\lim_{t\to 0} P_t(\frac{\gamma(t)-\pi((\gamma(t)))}{|\gamma(t)-\pi((\gamma(t))|} )=0.$$

By  L'Hospital's rule and the definability of $\gamma$ (which yields existence of the limits) we have
\begin{equation}\label{eq_gamma_direction}\lim_{t\to 0} \frac{\gamma(t)-\pi(\gamma(t))}{|\gamma(t)-\pi(\gamma(t))|}=\lim_{t\to 0} \frac{\gamma'(t)-\pi(\gamma'(t))}{|\gamma'(t)-\pi(\gamma'(t))|}.\end{equation}

It follows that:
\begin{equation}\label{eq_lim_preuve_w_implique r}\lim_{t\to 0} P_t(\frac{\gamma'(t)-\pi(\gamma'(t))}{|\gamma'(t)-\pi(\gamma'(t))|} )=0. \end{equation}

Because $\gamma(t)$ is a definable map in a polynomially bounded o-minimal structure, there must be  a positive real number $\ep$ such that \begin{equation}\label{eq_gamma}\frac{|\gamma(t)-\pi(\gamma(t))|}{t}\geq \ep |\gamma'(t)-\pi(\gamma'(t))|.\end{equation}

(This is not true for an o-minimal structure which is not polynomially bounded, by Miller's dichotomy \cite{m}. See the analysis of the example below. Or consider $\phi(t) = exp(-C/t)$, which extends continuously at $0$ by $\phi(0) = 0$, and calculate $\phi'(t) =  C \phi(t) /t^2$ so that $\frac{\phi(t)}{t \phi'(t)} = \frac{ t}{C}$ which tends to $0$ as $t$ tends to $0$.)

It is easy to see that if $\gamma$ is not tangent to $X$ at $0$ then by $(a)$-regularity, the ratio test $(r)$ holds. Thus we may assume that $\gamma$ is tangent to $X$. Because also $\gamma$ is parametrised by arc-length, it follows that $ |\gamma'(t) |$ and $ |\pi( \gamma'(t)) |$ tend to 1 as $t$ tends to $0$.

Consequently, because $\gamma(t)$ lies on $Y$, and $
\gamma$ is definable and thus may be assumed to be of class $C^1$ on $(0,\epsilon)$, it follows that $P_t(\gamma'(t)) = 0$, and we have:
$$R(\gamma(t)) = \frac{|P_t(\pi(\gamma'(t)))||\gamma(t)|}{|\gamma(t)-\pi(\gamma(t))| |\pi( \gamma'(t)) | } \approx \frac{|P_t(\gamma'(t)-\pi(\gamma'(t)))|t}{|\gamma(t)-\pi(\gamma(t))| |\pi( \gamma'(t)) |}$$

$$ \approx \frac{|P_t(\gamma'(t)-\pi(\gamma'(t)))|t}{|\gamma(t)-\pi(\gamma(t))|} \leq \frac{1}{\ep}\frac{|P_t(\gamma'(t)-\pi(\gamma'(t)))|}{|\gamma'(t)-\pi(\gamma'(t))|},$$
which tends to zero by (\ref{eq_lim_preuve_w_implique r}).
\end{proof}

\medskip

The converse implication, that $(r)$ implies $(b)$, this time for $Y$ of all dimensions, was also proved by Kuo \cite{k}. Actually Kuo treated the semi-analytic case as his paper predates the introduction of subanalytic sets. Verdier \cite{ve} confirmed that Kuo's proof works also  for subanalytic stratifications. Here we show the new result that the implication is actually valid for definable sets in {\it any} o-minimal structure, not only for those which are polynomially bounded.

\begin{proposition}\label{pro_r_implique_b}
Kuo's ratio test $(r)$ for $(Y,X)$ at $0 \in X$  implies Whitney's condition  $(b)$ at $0$  for any o-minimal structure, and for $X$ of any dimension.
\end{proposition}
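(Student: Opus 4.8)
The plan is to establish $(r)\Rightarrow(a)$ and $(r)\Rightarrow(b^{\pi})$ separately, since by the definitions above condition $(b)$ is exactly the conjunction of these two. The first is immediate: taking for $\pi$ the nearest-point retraction onto $X$ we have $\Vert y\pi(y)\Vert=d(y,X)\le\Vert y\Vert$ because $0\in X$, hence $\alpha_{Y,X}(y)=R_{Y,X}(y)\,\Vert y\pi(y)\Vert/\Vert y\Vert\le R_{Y,X}(y)\to 0$. The real content is $(b^{\pi})$, which I would prove by contradiction. If $\beta_{Y,X}(y)\not\to 0$ then for some $\ep_{0}>0$ the definable set $\{y\in Y:\beta_{Y,X}(y)\ge\ep_{0}\}$ has $0$ in its closure, so the curve selection lemma yields a definable arc $\gamma\colon(0,\delta)\to Y$, which I parametrise by arc-length, with $\gamma(0^{+})=0$ and $\beta_{Y,X}(\gamma(t))\ge\ep_{0}$. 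After a $C^{2}$ change of coordinates (preserving both $(r)$ and $(b^{\pi})$) I may assume $X=\R^{k}\times\{0\}$ and $\pi(x,z)=(x,0)$, and write $\gamma(t)=(x(t),z(t))$, so $z(t)\neq 0$ and $\gamma(t)-\pi(\gamma(t))=(0,z(t))$. By o-minimality the limits $\tau:=\lim_{t\to 0}T_{\gamma(t)}Y$ and $\lambda:=\lim_{t\to 0}z(t)/\Vert z(t)\Vert$ exist, and $d(\lambda,\tau)=\lim_{t\to 0}\beta_{Y,X}(\gamma(t))\ge\ep_{0}>0$; using $(a)$, now available, $T_{0}X=\R^{k}\times\{0\}\subseteq\tau$. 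If $\gamma$ is not tangent to $X$, then $\lambda$ is the normalisation of $\ell-P_{\R^{k}\times\{0\}}\ell$ with $\ell=\lim\gamma'(t)$, and since $\ell,P_{\R^{k}\times\{0\}}\ell\in\tau$ this gives $\lambda\in\tau$, a contradiction. So $\gamma$ is tangent to $X$, i.e. $z(t)=o(t)$, which is the case to be ruled out.

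In the tangent case I would write $\tau=(\R^{k}\times\{0\})\oplus\tau'$ with $\tau'=\tau\cap(\{0\}^{k}\times\R^{n-k})$; since $\lambda\in\{0\}^{k}\times\R^{n-k}$ one has $d(\lambda,\tau)=d(\lambda,\tau')$ computed inside $\R^{n-k}$, so it suffices to reach $\lambda\in\tau'$. Let $Q\colon\R^{n}\to\R^{n-k}$ be the projection onto the last $n-k$ coordinates, so $Q\gamma(t)=z(t)$, $Q\gamma'(t)=z'(t)$; put $V(s)=T_{\gamma(s)}Y$ and $a(s)=\alpha_{Y,X}(\gamma(s))\to 0$. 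For small $s$ the $k$-plane $\Pi(s):=P_{V(s)}(\R^{k}\times\{0\})\subseteq V(s)$ lies within $C\,a(s)$ of $\R^{k}\times\{0\}$, its orthocomplement $W(s)$ in $V(s)$ lies within $C\,a(s)$ of a subspace of $\{0\}^{k}\times\R^{n-k}$, and $W(s)\to\tau'$ since $V(s)\to\tau$ and $\Pi(s)\to\R^{k}\times\{0\}$. Decomposing $\gamma'(s)=p(s)+w(s)$ along $\Pi(s)\oplus W(s)$ and applying $Q$ then produces the pointwise bound
\[
d\big(z'(s),\tau'\big)\ \le\ C\,a(s)+\eta(s)\,\Vert z'(s)\Vert,\qquad \eta(s)\to 0,
\]
the point being that $Q$ annihilates $\R^{k}\times\{0\}$, so $\Vert Qp(s)\Vert\le C\,a(s)$, while $Qw(s)$ is within $\eta(s)\Vert Qw(s)\Vert$ of $\tau'$. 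Writing $z(t)=\int_{0}^{t}z'(s)\,ds$ and projecting onto $(\tau')^{\perp}$ gives $d(z(t),\tau')\le C\int_{0}^{t}a(s)\,ds+\int_{0}^{t}\eta(s)\Vert z'(s)\Vert\,ds$.

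The last step is to check both integrals are $o(\Vert z(t)\Vert)$, and this is where the o-minimal hypothesis enters. Since $\gamma$ is definable, each coordinate of $z$ is monotone near $0$, so $\int_{0}^{t}\Vert z'(s)\Vert\,ds\le\sqrt{n}\,\Vert z(t)\Vert$; replacing $\eta$ by a non-increasing majorant makes the second integral $o(\Vert z(t)\Vert)$. For the first, $(r)$ gives $a(s)=r(s)\,\Vert z(s)\Vert/\Vert\gamma(s)\Vert\le 2\,r(s)\,\Vert z(s)\Vert/s$ with $r(s):=R_{Y,X}(\gamma(s))\to 0$ (using $\Vert\gamma(s)\Vert\sim s$); the definable function $s\mapsto\Vert z(s)\Vert/s$ tends to $0$, hence is increasing near $0$, so $\int_{0}^{t}\Vert z(s)\Vert s^{-1}\,ds\le\Vert z(t)\Vert$ and thus $\int_{0}^{t}a(s)\,ds\le 2\big(\sup_{0<s\le t}r(s)\big)\Vert z(t)\Vert=o(\Vert z(t)\Vert)$. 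Therefore $d(z(t),\tau')=o(\Vert z(t)\Vert)$, so $\lambda=\lim_{t\to 0}z(t)/\Vert z(t)\Vert$ lies in the closed set $\tau'$, contradicting $d(\lambda,\tau')>0$.

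I expect the genuine difficulty to be the pointwise estimate $d(z'(s),\tau')\le C\,a(s)+\eta(s)\Vert z'(s)\Vert$: the crude bound $d(z'(s),\tau')\le\Vert z'(s)\Vert$ only yields $\int_{0}^{t}d(z'(s),\tau')\,ds\lesssim\Vert z(t)\Vert$, of the \emph{same} order as the denominator and hence useless, so one really has to exploit the splitting of $V(s)$ forced by the smallness of $\alpha_{Y,X}$ to see that the part of $z'(s)$ transverse to $\tau'$ is governed by $a(s)$ up to a factor tending to $0$. Once that is in hand, the only analytic ingredient is the elementary inequality $\int_{0}^{t}\Vert z(s)\Vert s^{-1}\,ds\le\Vert z(t)\Vert$, valid in \emph{any} o-minimal structure by the monotonicity theorem — this is exactly what replaces the L'Hospital/polynomial-boundedness step that was unavoidable in the preceding proposition, and explains why this implication, unlike its converse, holds without restriction on the o-minimal structure.
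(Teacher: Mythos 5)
Your argument is correct, but it takes a genuinely different route from the paper's. The paper works entirely with the \emph{moving} tangent plane along a definable arc: since $\gamma'(t)\in T_{\gamma(t)}Y$, i.e. $P_t(\gamma'(t))=0$, the normal component of $\gamma'-\pi(\gamma')$ is that of $\pi(\gamma')$, hence of size at most $\alpha_{Y,X}(\gamma(t))$; dividing by $|\gamma'-\pi(\gamma')|$ and using the inequality $|\gamma(t)-\pi(\gamma(t))|\le t\,|\gamma'(t)-\pi(\gamma'(t))|$ (the mean-value/monotonicity direction opposite to \eqref{eq_gamma}, which holds in any o-minimal structure) this is bounded by essentially $R(\gamma(t))\to 0$, and the L'Hospital-type identity \eqref{eq_gamma_direction} then transfers the conclusion from the derivative difference to the secant $\gamma-\pi(\gamma)$, giving $(b^{\pi})$ in one stroke. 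You instead fix the \emph{limit} plane $\tau=\lim T_{\gamma(t)}Y$ and its vertical part $\tau'$, split $T_{\gamma(s)}Y$ into a $k$-plane close to $T_0X$ and a complement close to $\tau'$, prove the pointwise bound $d(z'(s),\tau')\lesssim a(s)+\eta(s)\|z'(s)\|$, and then integrate, replacing \eqref{eq_gamma_direction} by the two elementary monotonicity facts $\int_0^t\|z'(s)\|\,ds\lesssim\|z(t)\|$ and $\int_0^t\|z(s)\|s^{-1}\,ds\le\|z(t)\|$ (the latter because the definable function $\|z(s)\|/s$ tends to $0$, hence is non-decreasing), together with $a(s)\le R(\gamma(s))\|z(s)\|/\|\gamma(s)\|$. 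Both proofs are arc-wise, both exploit $\gamma'(s)\in T_{\gamma(s)}Y$, and both feed the ratio $R$ in at the same point; what yours buys is a very explicit display of where o-minimality (as opposed to polynomial boundedness) enters, plus a clean separation of the non-tangent case, where $(a)$ alone suffices; what it costs is the plane-splitting lemma and the passage to the fixed plane $\tau'$, which the paper's shorter differential argument avoids. Two small remarks: the majorant of $\eta$ you want is the \emph{non-decreasing} one $\sup_{0<u\le s}\eta(u)$ (a non-increasing majorant need not tend to $0$), though since your estimate only uses $\sup_{(0,t]}\eta\to 0$ nothing breaks; and, exactly like the paper, you tacitly assume the arc-length reparameterization of a definable arc is definable --- if one wishes to be scrupulous, parameterizing by $\|\gamma(t)\|$ instead keeps $\|\gamma'\|$ bounded and $\|\gamma(s)\|=s$, and none of your estimates change.
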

\begin{proof}

Fix a pair $(Y,X)$ satisfying $(r)$ at $0 \in X$.
 To prove that $(b)$ holds, take a definable $\gamma: [0,\ep)\to X$ with $\gamma(0)=0$.

 We may assume that $\gamma$ is parameterized by arc-length.

  Then because $P_t (\gamma'(t))\equiv 0$ ($\gamma(t)$ lies on $Y$) we may write using (\ref{eq_gamma}): $$\hskip-15truemm \left\vert P_t\left(\frac{\gamma'(t)-\pi(\gamma'(t))}{|\gamma'(t)-\pi(\gamma'(t))|}\right)\right\vert= \left\vert P_t\left(\frac{\pi(\gamma'(t))}{|\gamma'(t)-\pi(\gamma'(t))|}\right)\right\vert $$

  $$ <    \frac{t|P_t(\pi(\gamma'(t)))|}{|\gamma(t)-\pi(\gamma(t))|}\approx   \frac{|\gamma(t)| |P_t(\pi(\gamma'(t)))|}{|\gamma(t)-\pi(\gamma(t))| | \pi(\gamma'(t)) |} \leq R(\gamma(t)),$$
  using the mean-value theorem, and local monotonicity (due to definability and cell decomposition \cite{dm}, cf. the observation of Proposition 1.10 in \cite{l}) of $\gamma'$ to prove the strict inequality. But $R(\gamma(t))$  tends to zero in virtue of our hypothesis that  $(r)$ holds.  This shows that the angle between $\gamma'-\pi(\gamma')$ and $T_{\gamma(t)} Y$ tends to zero as $t$ goes to zero.   By (\ref{eq_gamma_direction}), the angle between $\gamma-\pi(\gamma)$ and $T_{\gamma(t)} Y$ must tend to zero as well. This establishes Whitney's condition $(b)$ at $0$ for the pair $(Y,X)$.
\end{proof}

The previous proposition has as an immediate corollary that the Kuo-Verdier condition $(w)$ (which trivially implies $(r)$) also implies Whitney's condition $(b)$ for definable stratifications in any o-minimal structure, a result previously proved by Ta L\^e Loi as Proposition 1.10 in  \cite{l}. We remark that $(w)$ does not imply $(b)$ for general $C^2$ stratified sets - consider the topologist's sine curve in the plane, $S = \{ y = sin \frac{1}{x} : x > 0 \} \cup \{ x = 0, - 1 \leq y \leq 1 \}$, taking the two strata $\{ x = 0, - 1 \leq y \leq 1 \}$ and its complement.

\bigskip

  Let $(Y,X)$ be a pair of strata such that $X \subset cl(X)\setminus Y$. As we may work with a coordinate system we shall identify $X$ with a neighborhood of the origin in $\R^k \times 0$. Denote by $\pi:\R^n \to \R^k$ the orthogonal projection.

Define  for $t$ positive small enough:
$$r(t):=\sup_{|\pi(y)|=t} \frac{\delta(X, T_{y} Y)}{|y-\pi(y)|}.$$

It is clear that the Kuo-Verdier condition $(w)$ is equivalent to the boundedness of $r(t)$ for small $t$. Moreover the ratio test $(r)$ is equivalent to the conjunction of $(a)$ and the property that $tr(t)$ tends to zero as $t$ goes to zero. 

\bigskip

\begin{definition}
Let $X$ and $Y$ be two strata with $X \subset cl(X)\setminus Y$. We will say that $(Y,X)$ satisfies {\bf the condition $(r_{\int} )$} at $x \in X$  if $\int_{[0,\ep]} r(t)dt <\infty$.
\end{definition}

%This condition is strictly stronger than Whitney's $(b)$ condition, even if the strata are definable in an o-minimal structure, as it is shown by examples \ref{ex1} and \ref{ex2}.
 We shall see that, combined with $(a)$-regularity, $(r_{\int})$ implies local topological triviality, normal pseudo-flatness and the continuity of the density, in the case where the strata are definable.

\bigskip

\begin{definition}
    An {\bf  $\alpha$-approximation of the identity} is a  homeomorphism $ h \; : \R^n \times \R^m  \rightarrow \R^n \times \R^m$ of type $h(x;t)=(h_t(x);t)$  with:
 $$|h_t(x)-x| \lesssim \alpha(|x|;t),$$
 $$|h_t^{-1}(x)-x| \lesssim \alpha(|x|;t).$$
\end{definition}

Given a definable set $A$ we set:
$$\psi (A;u) = \hn^k(A \cap B(0;u)),$$
where $k$ denotes the Hausdorff dimension of $A$ and $\hn^k$ the $k$-dimensional Hausdorff measure. Recall that the density (or Lelong number) of $A$ at the origin is then defined as:$$\theta(A):=\lim_{u\to 0} \frac{\psi(A,u)}{\mu_k u^k},$$
where $\mu_k$ stands for the volume of the unit ball in $\R^k$.

\begin{theorem}\label{alpha_approx_invariance_des_vol}(Theorem $4.5$ of \cite{va2})
 Let $A$ and $B$ be two definable (in some o-minimal structure) families of sets   of dimension $l$ of $\R^n \times \R^m$. Let $\alpha:\; \R \times \R^m\rightarrow \R$ be a  function with $\alpha(r,t)\leq\frac{r}{3}$,  for all $t \in \R^m$, and
let $h: A \rightarrow B$ be an $\alpha$-approximation of the identity and let $V$ be a compact subset of $\R^m$.

There is a constant $C$ such that for every $t \in V $ we have for $u>0$ small enough:
$$|\psi(A_t;u)-\psi(B_t;u)| \leq C \alpha(u;t)u^{l-1}.$$
\end{theorem}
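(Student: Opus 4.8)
The plan is to reduce the estimate on the difference of volume functions $\psi(A_t;u)-\psi(B_t;u)$ to a local computation on the ``collar'' region $B(0;u)\setminus B(0;u')$ for a suitable $u'<u$, exploiting the fact that the $\alpha$-approximation $h_t$ moves points by at most $\alpha(|x|;t)$. First I would fix $t\in V$ and work with the single pair of sets $A_t,B_t\subset\R^n$, suppressing $t$ in the notation and writing $\alpha(u)$ for $\alpha(u;t)$; the uniformity of the constant $C$ over the compact set $V$ will follow at the end from definable choice (Hardt triviality / uniform cell decomposition) applied to the definable families, so that all the estimates produced below can be taken with a single $C$. Since $h_t$ and its inverse displace points by at most $\alpha(|x|;t)\le |x|/3$, we have the inclusions
$$h_t\bigl(A\cap B(0;u-\alpha(u))\bigr)\subset B\cap B(0;u)\subset h_t\bigl(A\cap B(0;u+\alpha(u))\bigr),$$
up to the harmless monotonicity adjustments coming from the fact that $\alpha(|x|;t)$ depends on $|x|$ (one uses that $\alpha(r;t)\le r/3$ forces $r\mapsto r\pm\alpha(r;t)$ to be increasing with comparable behaviour). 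Because $h_t$ is, on the definable set $A$, a definable homeomorphism, one wants it to be measure-comparable; but in fact we do not need $h_t$ to preserve $\hn^l$ — we only need to control the $\hn^l$-measure of the thin shell.

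The key step is therefore the estimate
$$\hn^l\bigl(A\cap (B(0;u+\alpha(u))\setminus B(0;u-\alpha(u)))\bigr)\lesssim \alpha(u)\,u^{l-1},$$
and the same for $B$. This is where definability is essential and is the main obstacle: one invokes that for a definable set $A$ of dimension $l$ the function $u\mapsto \psi(A;u)=\hn^l(A\cap B(0;u))$ is a definable function of $u$, hence piecewise $C^1$ and monotone, with derivative $\psi'(A;u)$ controlled by the $(l-1)$-volume of the sphere section $A\cap S(0;u)$, which by the definable (hence curve-selection / Łojasiewicz-type) bounds on definable sets is $O(u^{l-1})$ near $0$ — the implied constant again being uniform in $t\in V$ by working with the definable family and a compact parameter set. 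Integrating this derivative bound across the interval $[u-\alpha(u),\,u+\alpha(u)]$, whose length is $2\alpha(u)$, yields precisely the shell estimate. I expect the delicate points to be (i) justifying the $O(u^{l-1})$ bound on the spherical slices uniformly in the parameter, for which one appeals to \cite{va2} (this is exactly the kind of statement proved there, so it may be quoted) and to the o-minimal finiteness theorems, and (ii) bookkeeping the two-sided $\alpha$-displacement so that no factor of $u$ is lost.

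Finally I would assemble the pieces: from the displacement inclusions,
$$\psi(A;u-\alpha(u))\ \le\ \psi(B;u)\ \le\ \psi(A;u+\alpha(u)),$$
so that
$$|\psi(A;u)-\psi(B;u)|\ \le\ \psi(A;u+\alpha(u))-\psi(A;u-\alpha(u))\ \le\ \hn^l\bigl(A\cap(B(0;u+\alpha(u))\setminus B(0;u-\alpha(u)))\bigr),$$
which by the shell estimate is $\le C\alpha(u)u^{l-1}$ for $u$ small, with $C$ depending only on the definable families $A,B$ and on $V$. Since the whole argument used only the crude bound $\alpha(r;t)\le r/3$ (to keep the perturbed radii comparable to $u$ and positive) and the definability of the families, this gives the theorem. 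In writing it up I would be careful to state the uniform spherical-slice bound as a lemma (or cite it from \cite{va2}) before the main chain of inequalities, since that is the analytic heart of the estimate and everything else is elementary geometry of balls.
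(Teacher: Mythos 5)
Note first that the paper does not prove this statement at all: it is quoted verbatim as Theorem 4.5 of \cite{va2}, so your proposal has to be judged against what such a proof must actually accomplish. The decisive step in your outline is the passage from the displacement inclusions $h_t(A\cap B(0;u-\alpha))\subset B\cap B(0;u)\subset h_t(A\cap B(0;u+\alpha))$ to the chain $\psi(A;u-\alpha)\le\psi(B;u)\le\psi(A;u+\alpha)$, and this step is not justified. The inclusions only bound $\hn^l\bigl(h_t(A\cap B(0;u\mp\alpha))\bigr)$; to replace $\hn^l(h_t(S))$ by $\hn^l(S)$ you would need $h_t$ not to distort $l$-dimensional Hausdorff measure (and in fact to distort it only by a relative error $O(\alpha(u;t)/u)$, since the target estimate is of order $\alpha u^{l-1}$, much finer than $u^l$). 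But an $\alpha$-approximation of the identity is merely a homeomorphism with small displacement: it is not assumed definable (contrary to your parenthetical remark), let alone Lipschitz, and in the intended application (Proposition 3.9) it is produced by integrating a stratified vector field. Small displacement gives no control on measure distortion: a rectifiable arc of length $2$ squeezed into the $\ep$-tube around a unit segment is homeomorphic to that segment by a map moving points by at most $\ep$, yet the volumes inside $B(0;u)$ differ by an amount of order $u$, not $O(\ep\, u^{l-1})$. Since at this step your argument uses nothing about the definability of $A$ and $B$ (definability enters your outline only through the shell estimate, which is indeed true and is the easy part), the reduction collapses; the shell bound is not the analytic heart of the theorem.

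What is missing is precisely the mechanism by which the topological information carried by $h$ (it matches up the sheets and components of $A_t$ and $B_t$, which mere Hausdorff closeness does not: think of one segment versus two parallel segments at distance $\ep$) is converted into a volume comparison, necessarily in combination with uniform finiteness bounds for the definable families over the compact parameter set $V$. This is what the cited proof in \cite{va2} does, via integral-geometric (Cauchy--Crofton type) expressions for $\psi$ and uniform bounds in definable families, rather than by transporting measure with $h_t$ as you do. A secondary, much smaller issue: $\alpha(r;t)\le r/3$ does not force $r\mapsto r\pm\alpha(r;t)$ to be monotone, so the ``harmless monotonicity adjustments'' need a different justification (e.g.\ replacing $\alpha$ by $\sup_{s\le r}\alpha(s;t)$); but the essential gap is the unjustified measure comparison above.
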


The next result is a generalisation of a theorem of Fukui and Paunescu \cite{fp}.

\begin{proposition}\label{p isotopie b}
Let $(A;\Sigma)$ be a definable  stratified set and assume that  $Y:=0\times\R^{k}$ is a stratum. If all   pairs of strata $(X,X') \in \Sigma \times \Sigma$  satisfy conditions $(a)$ and  $\rb$ then  there exist neighbourhoods $U$ and $V$ of the origin in $\R^n$ and $\R^k$ and a topological trivialization
 $$h : A\cap U \to A_0 \cap U\times V,  $$
$(x;t)\mapsto (h_t(x);t)$ such that $h_t:\R^{n-k} \to \R^{n-k}$ is an $\alpha$-approximation of the identity with $\alpha(u;t)= u \int_0 ^{t} r(s)ds$.
\end{proposition}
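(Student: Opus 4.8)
The plan is to build the trivialization by integrating a controlled vector field, in the spirit of the Thom--Mather isotopy theorem, but with quantitative control coming from the $(r_{\int})$ hypothesis rather than just $(a)$-regularity. First I would set up coordinates so that $Y = 0 \times \R^k$ and, for each stratum $X'$ meeting a neighbourhood of the origin, choose controlled tube data $(\pi_{X'}, \rho_{X'})$ compatible with the projection $\pi : \R^n \to \R^k$; by $(a)$-regularity these tube systems can be chosen to be compatible (the usual Thom--Mather construction goes through for definable stratifications). I would then lift the constant vector field $\partial/\partial t_j$ on $V \subset \R^k$ to a stratified vector field $\xi_j$ on $A$ near the origin which is tangent to every stratum, projects to $\partial/\partial t_j$ under $\pi$, and is compatible with the tube data. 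The trivialization $h$ is obtained by integrating the $\xi_j$'s; the point is not merely that $h$ exists and is a homeomorphism of stratified sets (that is classical), but that $h_t$ is an $\alpha$-approximation of the identity with the specific $\alpha(u;t) = u\int_0^t r(s)\,ds$.

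The key quantitative step is therefore an estimate on the flow. On a stratum $X'$ with $X \subset cl(X') \setminus$ (lower strata), a point $y$ at distance $\approx u$ from $X$ with $\pi(y) = s$ moves under the lifted field with a transverse component bounded, after the usual normalisation, by $|y - \pi(y)|$ times the angle defect $\delta(X, T_yX')/1$ — more precisely the rate at which $|h_s(y) - y|$ can grow is controlled by $\delta(X, T_y X') \cdot$ (something of size comparable to $|y-\pi(y)|/|\pi(y)|$ coming from the fact that the base speed is unit and the radial distance scales). Unwinding this, the transverse displacement accumulated as $s$ runs from $0$ to $t$ is bounded by $\int_0^t \big(\sup_{|\pi(y)|=s} \frac{\delta(X,T_yX')}{|y-\pi(y)|}\big)|y-\pi(y)|\,ds$, and since along the flow line $|y - \pi(y)|$ stays comparable to $|h_s(x)|$, this is $\lesssim |x|\int_0^t r(s)\,ds$. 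The same bound for the inverse flow is symmetric. Putting $\alpha(u;t) = u\int_0^t r(s)\,ds$, finiteness of $\int_0^\epsilon r(s)\,ds$ (this is exactly $(r_{\int})$) guarantees $\alpha$ is finite and, shrinking the neighbourhoods, that $\alpha(u;t) \le u/3$, so the displacement estimate makes $h_t$ genuinely an $\alpha$-approximation of the identity as in the Definition; in particular $h_t$ is a homeomorphism onto its image because the flow is.

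I would organise the argument so that the combinatorial/topological content (existence of compatible controlled tube data for definable stratifications, and that integrating the lifted fields gives a level-preserving homeomorphism) is cited from the Thom--Mather machinery together with o-minimal cell decomposition, and the genuinely new content is isolated in the Gronwall-type differential inequality for the transverse component of the flow along a single pair of strata. The main obstacle I expect is precisely this inequality: one must show that $(a)$-regularity lets us choose the lifted vector field so that its transverse component at $y$ is bounded by $|y-\pi(y)|$ times the pointwise defect $\delta(X, T_yX')$ up to a uniform constant — this is where $(a)$ is used to keep the vector field from ``escaping'' the stratum faster than the angle allows — and then to integrate this estimate against $ds$ using that, by definition of $r(t)$, $\sup_{|\pi(y)|=s}\delta(X,T_yX')/|y-\pi(y)| = r(s)$, and that along a flow line $|y-\pi(y)| \asymp |\pi(y)|$ is not literally true (the flow is over a $k$-dimensional base so $|\pi(y)|$ varies) but $|h_s(x)| \asymp |x|$ is maintained, which is what actually enters $\alpha$. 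Controlling the interplay between $|h_s(x)|$, $|h_s(x) - \pi(h_s(x))|$ and $|\pi(h_s(x))|$ uniformly along the flow — so that the $r(s)$ appearing really is evaluated at a parameter proportional to arclength on the base and the constants are uniform for $t$ in the compact set $V$ — is the delicate bookkeeping, and Theorem~\ref{alpha_approx_invariance_des_vol} is then what makes this estimate useful downstream for the density statement.
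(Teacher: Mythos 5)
Your proposal is correct and follows essentially the same route as the paper's proof: a stratified lift of the base field controlled by the estimate $\delta(X,T_yX')\le r(|\pi(y)|)\,|y-\pi(y)|$ (obtained from $(a)$ and the definition of $r$), integrated as in Thom--Mather, with the Gronwall-type displacement bound $|h_t(x)-x|\lesssim |x|\int_0^t r(s)\,ds$ that the paper delegates to Lemma 3.4 of \cite{ot3} and Lemma 3.3 of \cite{ot2}. The only cosmetic difference is that the paper reduces to a one-dimensional base stratum rather than handling the $k$ lifted fields and tube bookkeeping explicitly.
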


\begin{proof}
%(see Definition \ref{def alpha approx})
For simplicity, we will assume that $Y$ is one dimensional. We construct an isotopy by integration of vector fields as in
Thom-Mather isotopy lemma \cite{ma}.
%This isotopy will give rise to an $\alpha$-approximation of the identity, with $\alpha(r;t)=r log^{1-\beta} |t|$ which, by Corollary , will imply the desired result.

Thanks to the  $\rb$  condition  the tangent spaces
satisfy the following estimate:
$$\delta(X; T_y Y)\leq C r(|\pi(y)|)\cdot |y-\pi(y)|$$
 (where $\pi$ is the orthogonal projection onto $X$)  in a sufficiently small
neighborhood of the origin.

By standard arguments \cite{ve,ma,ot2} we may obtain a stratified
unit vector field $v$ defined in a neighbourhood of the origin in  $A$, tangent to $X$ and
 to the strata, smooth on every stratum,  and
 satisfying $\pi(v)=\pa_n$ (where $\pa_n$ generates $X$) as well as:
\begin{equation}\label{champ r^e proof}
|v(y)-v(\pi(y))| \leq C r(|\pi(y)|)\cdot |y-\pi(y)|
\end{equation}
for some constant $C$.

 Denote by $\phi$ the
one-parameter group generated by this vector
 field. Let  $\phi=(\phi_1;\phi_2) \in \R^{n-1}  \times \R $; we also  have  by
  (\ref{champ r^e proof}) (see   \cite{ot3} Lemma $3.4$) a positive constant $C$ such that:
\begin{equation}\label{eq pr isot courbes int}
|\phi_1 (q;-t)-y| \leq C\int_{0} ^{t} r(s) ds \cdot |y|,
\end{equation}
for $q=(y,t) \in Y \subset  \R^{n-1} \times \R $. Existence of integral curves is proved as in \cite{ot2} (see Lemma $3.3$ of the latter article).

The desired trivialization in then given by $h_t(y):= \phi_1(q,-t)$ for $y \in A_t$, $q=(y,t)$.
 By
     (\ref{eq pr isot courbes int}), it is an $\alpha$-approximation of the identity with
     $\alpha(u;t) =u\int_0 ^t r(s)ds$.
\end{proof}

The following theorem is a generalisation of Comte's theorem \cite{c}.

\begin{theorem}\label{thm_density_continuity}
Let $(A;\Sigma)$ be a definable  stratified set and assume that  $X:=0\times\R^{k}  $ is a stratum. If all   pairs of strata $(Y,Y') \in \Sigma \times \Sigma$  satisfy conditions $(a)$ and  $\rb$  then the density of $A$ is continuous along $X.$
\end{theorem}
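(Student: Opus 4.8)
The plan is to deduce continuity of the density along $X$ from the trivialization produced in Proposition~\ref{p isotopie b} together with the volume-comparison estimate of Theorem~\ref{alpha_approx_invariance_des_vol}. First I would set up coordinates so that $X = 0\times\R^k$ and, picking a point $x_0\in X$, view the family $\{A_t\}$ indexed by $t\in X$ near $x_0$; the density $\theta(A_t)$ at the point $t$ of $X$ is by definition $\lim_{u\to 0}\psi(A_t;u)/(\mu_l u^l)$, where $l = \dim A - \dim X$ is the dimension of the fibre (here I work with the fibres $A_t = A\cap\pi^{-1}(t)$, which is where the cone structure lives; strictly one should check that the density of the stratified set $A$ along $X$ agrees with that of the fibre, which follows from a Fubini-type splitting of Hausdorff measure and the fact that $X$ is smooth). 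Since the pair consisting of $X$ and the higher strata satisfies $(a)$ and $(r_{\int})$, Proposition~\ref{p isotopie b} gives a topological trivialization $h:(x;t)\mapsto(h_t(x);t)$ with $h_t$ an $\alpha$-approximation of the identity for $\alpha(u;t) = u\int_0^t r(s)\,ds$; here the key point is that $\int_0^\epsilon r(s)\,ds<\infty$ by hypothesis, so $\alpha(u;t)\to 0$ as $t\to 0$ uniformly for $u$ in a bounded range.

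Next I would apply Theorem~\ref{alpha_approx_invariance_des_vol} to the families $A = \{A_t\}$ and $B = \{A_0\}$ (the constant family), with the homeomorphism $h$: this yields a constant $C$ such that for $t$ in a compact neighbourhood of the origin in $X$ and $u>0$ small,
\[
|\psi(A_t;u) - \psi(A_0;u)| \le C\,\alpha(u;t)\,u^{l-1} = C\Bigl(\int_0^t r(s)\,ds\Bigr)u^{l}.
\]
Dividing by $\mu_l u^l$ gives $\bigl|\psi(A_t;u)/(\mu_l u^l) - \psi(A_0;u)/(\mu_l u^l)\bigr| \le (C/\mu_l)\int_0^t r(s)\,ds$, a bound \emph{uniform in $u$}. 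Letting $u\to 0$, the two quotients converge to $\theta(A_t)$ and $\theta(A_0)$ respectively (existence of these limits is the standard definability of the density), so
\[
|\theta(A_t) - \theta(A_0)| \le \frac{C}{\mu_l}\int_0^t r(s)\,ds \xrightarrow[t\to 0]{} 0,
\]
since $r$ is integrable near $0$. This proves continuity at the chosen point of $X$, and as the point was arbitrary (up to a translation along $X$, which does not affect the hypotheses), continuity along all of $X$ follows.

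The main obstacle I expect is ensuring that Theorem~\ref{alpha_approx_invariance_des_vol} genuinely applies: one must check that $h$ restricts to an $\alpha$-approximation between the fibres $A_t$ and $A_0$ with the \emph{same} $\alpha$ that is summable, and that the hypothesis $\alpha(u;t)\le u/3$ required there holds for $u$ and $t$ small — this is exactly where $\int_0 r(s)\,ds<\infty$ is used, since it forces $\int_0^t r(s)\,ds$ to be small for small $t$, hence $\alpha(u;t) = u\int_0^t r(s)\,ds \le u/3$. A secondary technical point is the reduction from the density of the stratified set $A$ along $X$ to the density of the fibre $A_t$; handling non-one-dimensional $X$ requires the trivialization of Proposition~\ref{p isotopie b} in the several-parameter case and a routine product-measure argument, but the one-dimensional case already contains all the essential ideas and, as in the cited proof of Proposition~\ref{p isotopie b}, the general case is obtained by iterating or by a straightforward adaptation.
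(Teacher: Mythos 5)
Your mechanism (trivialization from Proposition~\ref{p isotopie b}, volume comparison via Theorem~\ref{alpha_approx_invariance_des_vol}, a bound uniform in $u$, then $u\to 0$) is the right one, but it is applied to the wrong object, and the step you wave away is precisely the gap. What you prove is continuity of $t\mapsto\theta(A_t)$, the density at $0$ of the \emph{fibre} $A_t=A\cap\pi^{-1}(t)$; the theorem asserts continuity of $t\mapsto\theta\bigl(A,(0;t)\bigr)$, the density of the full $l$-dimensional set $A$ at the points of $X$. Your bridge --- ``the density of $A$ along $X$ agrees with that of the fibre, by a Fubini-type splitting of Hausdorff measure'' --- is not a routine fact. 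Via Kurdyka--Raby the density of $A$ at $x\in X$ is the density of the tangent cone $C_xA$, so the asserted equality amounts to knowing that $C_xA$ splits as $T_xX$ times the tangent cone of the fibre, with the correct multiplicities; this is essentially conditions $(n)$ and normal pseudo-flatness (Theorem~\ref{thm_ppn}) plus a product/cone argument, and a Fubini/coarea computation also needs uniform control of $\psi(A_{t'};u)$ over the nearby fibres $t'$ cut by a ball centred at $(0;t)$ --- i.e.\ exactly the kind of estimate the theorem is meant to produce. As stated, your reduction is an unproved claim of comparable depth to the theorem itself.

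The paper avoids the issue by never passing to fibres: it sets $\A_t$ to be the translate to the origin of the germ of the \emph{whole} set $A$ at $(0;t)$, so that $\theta(\A_t)$ is literally the density in question, and compares $\A_t$ with $\A_0$ by the composed map $H_t(y,t')=h_{t'}^{-1}\bigl(h_{t+t'}(y)\bigr)$. Because a ball of radius $u$ about $(0;t)$ only meets fibres with $|t'|\le u$, $H_t$ is an $\tilde{\alpha}$-approximation of the identity with $\tilde{\alpha}(u,t)=u\int_{[0,\,u+|t|]}r(s)\,ds$; Theorem~\ref{alpha_approx_invariance_des_vol} applied in the full dimension $l=\dim A$ then gives $|\psi(\A_t;u)-\psi(\A_0;u)|\leq C\,u^{l}\int_{[0,\,u+|t|]}r(s)\,ds$, and after dividing by $\mu_l u^l$ and letting $u\to 0$ the extra $u$ in the integration bound disappears, yielding $|\theta(\A_t)-\theta(\A_0)|\leq C\int_{[0,|t|]}r(s)\,ds\to 0$. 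So to repair your proof you must either rerun your estimate on these translated germs of $A$ (which forces the composed-map construction and the enlarged modulus $\tilde{\alpha}$), or supply an actual proof that the fibre density equals the density of $A$ along $X$ under $(a)+\rb$; neither is a formality.
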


\begin{proof}
 Set
$$\A :=\{(y;t';t) \in \R^{n-k}  \times \R^k \times \R^k: (y;t+t') \in Y \}.$$
 This defines a family of sets (parameterized by $t \in \R^k$) such that the germ of $\A_t$ at $t$ is the germ of $A$ at $(0_{\R^{n-k}};t)$.  It is thus enough to prove that the function $t \mapsto \theta(\A_t)$ is continuous along the stratum $X.$

 Apply Theorem \ref{p isotopie b} to get an $\alpha$ approximation of the identity $h: A  \to A_0 \times X$ with $\alpha(u;t)=u\int_0 ^t r(s)ds$.
  Observe that the family $H_t(y,t'):=h_{t'}^{-1}(h_{t+t'}(y))$ maps $\A_t$ onto $\A_{0}$. We claim that it is  an $\tilde{\alpha}$-approximation of the identity with $\tilde{\alpha}(u,t)=u\int_{[0,u+|t|]} r(s) ds$.

     Take $(y,t')$ in $B(0,u)$. In particular $|t'|\leq u$. Thus $$|h_{t+t'}(y)-y|\lesssim u \int_{[0,|t'|+|t|]} r(s) ds\leq u \int_{[0,u+|t|]} r(s) ds.$$
     In particular, this shows that $h_{t+t'}(y)$ belongs to $B(0,2u)$ (for $t$ and $t'$ sufficiently small).
Therefore, as $h$ is an $\alpha$-approximation of the identity:$$|h_{t'}^{-1} (h_{t+t'}(y))-h_{t+t'}(y)|\lesssim u \int_{[0,t']} r(s) ds\leq u \int_{[0,u+|t|]} r(s) ds.$$
Together with the preceding estimate this implies the desired inequality. A similar computation holds for $H^{-1}$.

  By theorem \ref{alpha_approx_invariance_des_vol} we have for any $t$ and $u$ small enough:
 $$|\psi(\A_t;u)-\psi(\A_{0};u)| \leq C u^{l}\int_{[0,u+|t|]} r(s) ds,$$
where $l$ is the dimension of $A$ and $C$ is a positive constant.  Dividing by $\mu_k u^l$ and passing to the limit as $u$ tends to zero  we get:
 $$ |\theta(\A_t)-\theta(\A_0)| \leq C \int_{[0,|t|]} r(s) ds,$$
 which tends to zero as $t$ tends to zero.
\end{proof}

\begin{theorem}\label{thm_ppn}
If a stratification, definable in some o-minimal structure (not necessarily polynomially bounded), satisfies both $(a)$ and $\rb$ then it satisfies conditions (npf) and $(n)$.
\end{theorem}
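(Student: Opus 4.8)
The plan is to derive both conditions from the controlled trivialization furnished by Proposition~\ref{p isotopie b}, following the proof of the theorem of Orro and the first author recalled above (\cite{ot3}) in the $(a+r^e)$ case, the only new point being that the distortion function is now $\omega(t):=\int_0^t r(s)\,ds$ instead of $t^{1-e}$. The hypothesis $\rb$ is precisely the assertion that $\omega$ is finite, and then, $\omega$ being continuous and nondecreasing with $\omega(0)=0$, one has $\omega(t)\to 0$ as $t\to 0$; this vanishing is the crucial feature, and it is exactly what bare $(r)$ fails to give (with $(r)$ the integral $\int_0^t r$ may diverge). By cell decomposition we may assume $r$ monotone, and since $C_XS$ and $S_x$ are the finite unions, over the strata $Y_i$ with $X\subset cl(Y_i)$, of the $C_XY_i$ respectively the $(Y_i)_x$, it suffices to treat a single pair $(X,Y)$. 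Identify $X$ with $0\times\R^k\subset\R^{n-k}\times\R^k$ and let $\pi$ be the orthogonal projection onto $X$.

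First I would invoke Proposition~\ref{p isotopie b}, and its version centred at an arbitrary point $x_0\in X$ (legitimate since $(a)$ and $\rb$ hold throughout $X$), to obtain a trivialization $(x;t)\mapsto(h_t(x);t)$ carrying the slice of $S$ at $x_0+t$ onto the slice $S_{x_0}$, with $h_t(0)=0$ and $|h_t^{\pm1}(x)-x|\le|x|\,\omega_{x_0}(|t|)$. Transport the tangent cones of the slices: since $h_t$ fixes the centre of the slice and moves each point $x$ by at most $|x|\,\omega_{x_0}(|t|)$, the elementary bound $|\widehat u-\widehat v|\le 2|u-v|/\max(|u|,|v|)$ on unit vectors shows that every limit direction of the slice at $x_0+t$ lies within $2\omega_{x_0}(|t|)$ of one of $S_{x_0}$, and conversely; hence the Hausdorff distance between $C_{x_0+t}(S_{x_0+t})$ and $C_{x_0}(S_{x_0})$ is $\le 2\omega_{x_0}(|t|)\to 0$, so $x\mapsto C_x(S_x)$ is continuous along $X$.

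Next I would prove condition $(n)$, $(C_XS)_{x_0}=C_{x_0}(S_{x_0})$ (as symmetric cones, with the usual sign convention). In these coordinates, writing $y=(a,b)$ gives $\pi(y)=(0,b)$ and $\mu(y\pi(y))=(-a/|a|,0)$, so the inclusion coming from the sequences with $\pi$-image held fixed at $x_0$ is immediate; for the reverse, take $y_i=(a_i,b_i)\in S\setminus X$ with $y_i\to x_0$, set $t_i:=\pi(y_i)-x_0\to 0$, and push $a_i$ forward by $h_{t_i}$ into the slice $S_{x_0}$. Then $|h_{t_i}(a_i)-a_i|\le|a_i|\,\omega_{x_0}(|t_i|)$ with $\omega_{x_0}(|t_i|)\to 0$, so $h_{t_i}(a_i)=a_i+o(|a_i|)$ and $\lim a_i/|a_i|$ is already a limit direction of $S_{x_0}$ at $x_0$. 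Combining: $(n)$ identifies each fibre $(C_XS)_x$ with $C_x(S_x)$, while by the previous step $x\mapsto C_x(S_x)$ is continuous; hence the fibres of $p:C_XS\to X$ vary lower semicontinuously, which for this proper map is exactly the statement that $p$ is open, i.e.\ $(npf)$ holds.

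I expect the main obstacle to be the inclusion $(C_XS)_{x_0}\subseteq C_{x_0}(S_{x_0})$ in $(n)$: one has to see that limit directions $\mu(y_i\pi(y_i))$ produced by sequences whose projections $\pi(y_i)$ do not remain at $x_0$ are nonetheless tangent to the single slice $S_{x_0}$, and this is exactly where $\omega_{x_0}(t)\to 0$ — the genuine strengthening that $\rb$ provides over $(r)$ — is indispensable; without it Proposition~\ref{p isotopie b} is not available, and indeed the stratification of Section~4 is $(r)$-regular but fails $(npf)$. The remaining points to check are routine: the sign and symmetrisation conventions relating tangent and normal cones, the reduction to $X$ linear, and the fact (used above) that Proposition~\ref{p isotopie b} may be applied with an arbitrary basepoint of $X$. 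With these in hand the argument runs parallel to \cite{va1} and to \cite{ot2},\cite{ot3}.
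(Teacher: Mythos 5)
Your proposal is correct and takes essentially the same approach as the paper: its proof likewise consists of applying Proposition~\ref{p isotopie b} to get an $\alpha$-approximation of the identity with $\alpha(u;t)=u\int_0^{|t|}r(s)\,ds \to 0$ and observing that the variation of the secant directions therefore tends to zero, which gives $(n)$ and $(npf)$. Your write-up merely makes explicit (continuity of $x\mapsto C_x(S_x)$, the two inclusions for $(n)$, openness of $p$ from lower semicontinuity of the fibres) what the paper's one-sentence conclusion leaves implicit.
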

\begin{proof}
Fix a stratum $X$ of a $\rb$-regular stratification. Up to a coordinate system, we may assume that $X$ is $\{0\}\times \R^k$, $k=\dim X$. By Proposition \ref{p isotopie b},  there exists a neighborhood $U$ of the origin (in $A$) and a topological trivialization
 $$h : A \to A_0 \times V,  $$
$(y;t)\mapsto (h_t(y);t)$ such that $h_t:\R^{n-k} \to \R^{n-k}$ is an $\alpha$ approximation of the identity with $\alpha(u;t)=u \int_{[0,|t|]} r(s)ds$. This shows that the variation of the secant line $h_t(y)\pi(h_t(y))$ (in the projective space) tends to zero as  $t$ goes to zero.
\end{proof}

The following proposition can be thought of as a generalisation of Paw\l ucki's theorem \cite{pa}, valid for subanalytic sets and without the $(npf)$ criterion, which is a consequence of $(b)$ for subanalytic strata, by a theorem of Orro and Trotman \cite{ot3}.

\begin{proposition}
Let $X$ and $Y$ be two strata which are definable in an o-minimal structure. Assume that $\dim Y=\dim X+1$ and that $Y\cap B(0,\ep)$ is connected for any $\ep>0$ small enough. Then, $(Y,X)$ satisfies $(b)$  and $(npf)$  if and only if $Y\cup X$ is a $C^1$ manifold with boundary.
\end{proposition}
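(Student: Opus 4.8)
The plan is to prove the two implications separately, with the substantive direction being "$(b)$ and $(npf)$ $\Rightarrow$ $C^1$ manifold with boundary." For the easy direction, assume $Y\cup X$ is a $C^1$ manifold with boundary $X$. Then along $X$ the tangent spaces $T_yY$ converge to $T_xX\oplus(\text{normal line})$, which immediately gives condition $(a)$; since the boundary is $C^1$, the secant directions $y\pi(y)$ also converge into $T_yY$, giving $(b^\pi)$, hence $(b)$. For $(npf)$: a $C^1$ manifold with boundary $X$ has, at each $x\in X$, a single inner-normal half-line, so the fibre $(C_XY)_x$ of the normal cone is a single point depending continuously on $x$; thus $p:C_XY\to X$ is a homeomorphism, in particular open, which is $(npf)$.

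For the hard direction I would argue as follows. Work locally and put $X=\{0\}\times\R^k$, $k=\dim X$, so $\dim Y=k+1$ and $Y\cap B(0,\ep)$ is connected. The key object is the normal cone fibre $(C_XY)_x$: by the bound $(*)$ recalled in the paper (which holds here since $(b)$ for definable strata gives $(a)$, and we are told $(npf)$ holds, and $(a+n)$ or $(npf)$ forces $\dim(C_XY)_x\le\dim Y-\dim X-1=0$), each fibre is a finite set of unit vectors, all normal to $X$. Connectedness of $Y$ near $0$ together with $(npf)$ — the openness of $p:C_XY\to X$ — should force this finite set to be locally constant in $x$, hence (shrinking) to consist of a fixed number of points $\nu_1(x),\dots,\nu_m(x)$ varying continuously. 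Condition $(a)$ then says $T_yY\to T_xX\oplus\R\nu_i(x)$ as $y\to x$ within the "sheet" approaching along $\nu_i(x)$, so $Y$ decomposes near $X$ into $m$ pieces each of which, together with $X$, is a $C^1$ submanifold-with-boundary; then I must argue that $m=1$, using that $\dim Y=\dim X+1$ and $Y$ is connected near $0$ — a codimension-one stratum attaching to $X$ with two distinct limiting normal directions cannot stay connected and have $X$ as boundary unless the two half-sheets glue, i.e. $m=1$ (this is precisely the "manifolds-with-boundary all sharing boundary $L$" phenomenon of Paw\l ucki's theorem, specialised to the connected case). Finally, to upgrade "$Y\cup X$ is topologically a manifold with boundary with $C^1$ interior and $C^1$ boundary and matching tangent data" to genuinely $C^1$ across $X$, I would invoke the $(b)$ condition to control the secants $y\pi(y)$ and build a $C^1$ chart: parametrise a neighbourhood of $x$ in $Y\cup X$ by $(x,s)\mapsto$ the point at "height $s$" along the normal direction, and check the resulting map and its inverse are $C^1$ up to the boundary, using that $\beta_{Y,X}(y)\to 0$ controls the derivative in the normal direction and $\alpha_{Y,X}(y)\to 0$ controls it in the tangential directions.

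The main obstacle I expect is the last step: passing from the correct limiting behaviour of tangent planes and secants (conditions $(a)$, $(b^\pi)$) to an honest $C^1$ structure up to the boundary. Having the right tangent limits at boundary points is necessary but not obviously sufficient for a $C^1$-manifold-with-boundary structure; one needs some uniformity — essentially a Lipschitz or $o(1)$ modulus for how $T_yY$ and the secant direction approach their limits as functions of $\operatorname{dist}(y,X)$ — and extracting this from the mere limit conditions $(a)$ and $(b)$ is where definability (curve selection, monotonicity, uniform bounds on definable families) must be used, exactly as in Paw\l ucki's original subanalytic argument. A secondary subtlety is justifying that $(npf)$ plus connectedness genuinely forces the normal-cone fibre to be a single continuously-varying point rather than, say, a varying finite set whose cardinality jumps on a thin set; here openness of $p$ together with the o-minimal triviality/cell-decomposition machinery should do it, but it needs to be stated carefully.
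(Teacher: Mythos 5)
The ``if'' direction of your proposal is fine, but the hard direction has two genuine gaps. The first is your mechanism for getting a single limiting normal direction ($m=1$). You decompose $Y$ near $X$ into finitely many sheets, ask that the finite normal-cone fibres vary locally constantly (which you yourself defer as a ``subtlety''), and then claim that connectedness of $Y\cap B(0,\ep)$ forces the sheets to glue. As stated this does not follow: the sheet decomposition is only valid in a horn neighbourhood of $X$, while $Y\cap B(0,\ep)$ also contains points at distance comparable to $\ep$ from $X$, through which several sheets could in principle be joined, so connectedness of $Y$ does not by itself collapse the picture to one sheet. The paper's argument is different and supplies the missing idea: set $\gamma(y)=(\mu(y\pi(y)),y)$ and $W=\gamma(Y)$; definability gives $\dim (cl(W)\setminus W)\leq k$, so the generic fibre of $P: Z= cl(W)\cap (S^{n-1}\times X)\to X$ is $0$-dimensional, and $(npf)$, i.e.\ openness of $P$, transfers this bound to the fibre over $0$. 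That fibre is moreover \emph{connected}, because it is the decreasing intersection of the compact connected sets $cl\bigl(\{\mu(y\pi(y)): y\in Y\cap B(0,\ep)\}\bigr)$ --- this is exactly where the hypothesis that $Y\cap B(0,\ep)$ is connected is used --- hence it is a single point. In short, the correct connectedness argument is about the normal-cone fibre itself, not about $Y$ modulo a sheet decomposition.

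The second gap is the one you flag yourself as ``the main obstacle'': passing from pointwise limits of tangent planes and secants to an honest $C^1$ structure up to the boundary. Your proposal stops at naming the difficulty; the paper closes it as follows. Uniqueness of the limiting secant $l_x$ at every point $x$ of $X$ near $0$ makes $x\mapsto l_x$ continuous, and since by $(a)$ and $(b^{\pi})$ any limit of $T_yY$ contains $T_xX$ and the limiting secant, while $\dim Y=\dim X+1$, the limit $\tau_x=T_xX\oplus\R l_x$ exists and varies continuously along $X$. Then, for a generic linear projection, $Y$ is locally the graph of a $C^1$ function whose derivative extends continuously to the boundary, which gives the $C^1$ manifold-with-boundary structure (the frontier condition coming from topological triviality). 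Some such device --- a graph representation together with continuous extension of the derivative --- is needed to replace your chart-building sketch, which otherwise restates what has to be proved; your worry about needing a quantitative modulus is resolved precisely by having the limit exist and be continuous at \emph{every} nearby boundary point, not only at the origin.
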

\begin{proof}
$(b)$ and $(npf)$ are $C^1$ invariants. Hence, the if part is clear. Assume that these conditions hold and let us show that $Y \cup X$ is a $C^1$ manifold with boundary. We can identify $X$ with $\{0\} \times \R^k$.

  We first show that at any point  $x \in X$, $\lim_{y \to x} T_y Y$ exists. As the Whitney  $(b)$ condition implies the Whitney $(a)$ condition, we know that any limit of tangent space contains $T_xX$. By the Whitney $(b)$ condition, it also has to contain the limit of secant lines. Therefore, given a sequence $y_i$ tending to $x$, $\lim T_{y_i} Y$  (if it exists)  is characterized by the limit in the projective space of $y_i \pi(y_i)$  (where $\pi$ is the orthogonal projection onto $X$).

   Set $\gamma(y):=(\frac{y\pi(y)}{|y\pi(y)|},y)$. for $y \in Y$, and $W:=\gamma(Y)$.  As $W$ is a definable set of dimension $k+1$, we have:  $$\dim cl(W)\setminus W \leq k.$$

Moreover,  if we set $$Z:=W\cap (S^{n-1}\times X)$$  then, by the above inequality,
$\dim Z\leq k. $ Therefore, a generic fiber of the map $P:Z \to X$ defined by $(u,x) \mapsto x$ cannot have dimension bigger than $0$.

   By $(npf)$, $P$  is an open map. Hence, $P^{-1}(0)$ must be of dimension $0$ as well.
  As it is connected (since $Y$ is), it must be reduced to one single point.
  Hence, at any point $x$ of $X$ there is a unique limit of tangent space $\tau_x$, as claimed.

   The limiting $l_x$ secant being unique at every point $x$ of $X$, it must vary continuously along $X$. Consequently, $\tau_x$ varies continuously as well. As a matter of fact, for a generic projection, the stratum $Y$ is the graph of a function whose derivative extends continuously. The couple $(cl(Y),X)$ thus constitutes a $C^1$ manifold with boundary near the origin (due to topological triviality it must satisfy the frontier condition).
\end{proof}

It follows that  Theorem \ref{thm_ppn} admits the following corollaries.

\begin{corollary}
Let $X$ and $Y$ be two  (connected)  strata which are definable in an o-minimal structure. Assume that $\dim Y=\dim X+1$ and that $(Y,X)$ satisfies $(a)$ and $\rb$.  Then $X\cup Y$ is a $C^1$ manifold with boundary.
\end{corollary}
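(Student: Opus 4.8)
\emph{Proof strategy.} The plan is to verify that $(Y,X)$ is Whitney $(b)$-regular and normally pseudo-flat, and then to feed this into the Paw\l ucki-type Proposition immediately above, whose remaining hypotheses ($\dim Y=\dim X+1$ and connectedness) are exactly what we have assumed.

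First I would promote $(a)+\rb$ to Kuo's ratio test $(r)$. As recalled above, $(r)$ is equivalent to the conjunction of $(a)$ with the property that $t\, r(t)\to 0$ as $t\to 0$. If $r$ is bounded near $0$ this is immediate; otherwise, by the monotonicity theorem for definable functions, $r$ is monotone on some interval $(0,\delta)$ and, being unbounded at $0$, is in fact decreasing there, so that $t\, r(t)\le 2\int_{t/2}^{t}r(s)\,ds$, and the right-hand side tends to $0$ precisely because $\rb$ asserts $\int_{0}^{\ep}r(s)\,ds<\infty$. Hence $(r)$ holds, and Proposition~\ref{pro_r_implique_b} then yields Whitney's condition $(b)$ for $(Y,X)$.

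Next, Theorem~\ref{thm_ppn} applies directly: a definable stratification satisfying $(a)$ and $\rb$ is normally pseudo-flat (and satisfies $(n)$), so in particular $(Y,X)$ is $(npf)$. Having $(b)$, $(npf)$, $\dim Y=\dim X+1$ and the connectedness of $Y$ in hand, I would invoke the preceding Proposition to conclude that $X\cup Y$ is a $C^1$ manifold with boundary near the origin; since the origin was an arbitrary point of $X$ and all the hypotheses are local, the same holds near every point of $X$, which is the assertion.

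I expect no essential difficulty here: the corollary is an assembly of Theorem~\ref{thm_ppn}, Proposition~\ref{pro_r_implique_b} and the Paw\l ucki-type Proposition. The two points that deserve care are the implication $\rb\Rightarrow t\, r(t)\to 0$, which genuinely uses the definability of $r$ (through the monotonicity theorem) and would fail for a general $C^2$ datum, and the matching of the connectedness hypothesis: the preceding Proposition is phrased with ``$Y\cap B(0,\ep)$ connected for small $\ep$'', but its proof only uses this to force the fibre $P^{-1}(0)$ of the normal-cone projection to be a single point, so the global connectedness of the stratum $Y$ assumed here supplies exactly what is needed.
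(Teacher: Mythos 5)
Your main line is exactly the paper's: deduce $(b)$ via Proposition~\ref{pro_r_implique_b}, get $(npf)$ from Theorem~\ref{thm_ppn}, and conclude with the preceding Paw\l ucki-type proposition (3.9). The paper's own proof is literally these three citations; your extra step showing that $(a)+\rb$ yields $(r)$ (via the monotonicity theorem: if $r$ is unbounded it is eventually decreasing, so $t\,r(t)\le 2\int_{t/2}^{t}r(s)\,ds\to 0$) is correct and makes explicit a point the paper leaves implicit, since Proposition~\ref{pro_r_implique_b} is stated with hypothesis $(r)$, not $\rb$.

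The one flawed assertion is your resolution of the connectedness mismatch. Global connectedness of the stratum $Y$ does \emph{not} force the fibre $P^{-1}(x)$ of the normal cone to be connected; what the proof of Proposition 3.9 uses is connectedness of $Y\cap B(x,\ep)$ for all small $\ep$, i.e.\ connectedness of the germ of $Y$ at $x$, from which the fibre is connected because it is a nested intersection of compact sets $cl\{\mu(y-\pi(y)): y\in Y\cap B(x,\ep)\}$, each the closure of a continuous image of a connected set. With only global connectedness the conclusion can genuinely fail: take $X$ the $x$-axis in $\R^3$ and $Y=\R\times\gamma$, where $\gamma$ is a connected definable $C^2$ curve in the punctured $(y,z)$-plane approaching the origin from two distinct directions (say tangentially to the $y$-axis at one end and to the $z$-axis at the other, joined far from the origin). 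Then $(a)$, $\rb$, $(b)$ and $(npf)$ all hold, $\dim Y=\dim X+1$, $Y$ is connected, yet $X\cup Y$ is a two-page open book and not a $C^1$ submanifold with boundary; here $P^{-1}(x)$ consists of two points. So the corollary must be read with the same local connectedness hypothesis as Proposition 3.9 (this is evidently the intended reading, and the paper's one-line proof is equally silent on the point), and your sentence claiming that global connectedness "supplies exactly what is needed" should be replaced by the germ-connectedness assumption, after which your argument goes through verbatim.
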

\begin{proof}
By Proposition \ref{pro_r_implique_b}, $(Y,X)$ satisfies the Whitney $(b)$ condition. Moreover, by Theorem \ref{thm_ppn}, $X \cup Y$ is normally pseudo-flat.  The result follows from the preceding proposition.
\end{proof}

\begin{corollary}
Let $X$ and $Y$ be two  (connected)  strata which are definable in a polynomially bounded o-minimal structure. Assume that $\dim Y=\dim X+1$ and that $(Y,X)$ satisfies $(b)$.  Then $X\cup Y$ is a $C^1$ manifold with boundary.
\end{corollary}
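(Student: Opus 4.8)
The plan is to deduce the statement from the preceding corollary. Since Whitney $(b)$-regularity includes $(a)$ by definition, it suffices to check that, in a polynomially bounded structure, $(b)$-regularity of $(Y,X)$ forces the condition $\rb$ -- that is, that the definable function $r(t)=\sup_{|\pi(y)|=t}\delta(X,T_yY)/|y-\pi(y)|$ is integrable on $[0,\ep]$. First suppose $\dim X=1$. Then the proposition established above, asserting that $(b)$ implies Kuo's ratio test $(r)$ for one-dimensional strata in the polynomially bounded case, applies; as recorded in the discussion following Proposition \ref{pro_r_implique_b}, $(r)$ means that $(a)$ holds and $t\,r(t)\to 0$ as $t\to 0^{+}$ (in particular $r$ is real-valued near $0$). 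Being a one-variable definable function in a polynomially bounded o-minimal structure, $r$ is eventually continuous and monotone and either vanishes near $0$ or is asymptotic to $c\,t^{\alpha}$ for some $c>0$ and some exponent $\alpha$ in the field of exponents of the structure. The relation $t\,r(t)\to 0$ then forces $\alpha>-1$, whence $\int_{[0,\ep]}r(t)\,dt<\infty$; so $(Y,X)$ satisfies $\rb$, and the preceding corollary yields that $X\cup Y$ is a $C^1$ manifold with boundary.

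For $\dim X>1$ the plan is to reduce to the one-dimensional case by a generic linear slicing. Fix $x_0\in X$ and choose a generic affine subspace $W\ni x_0$ of codimension $\dim X-1$; for generic $W$ it is transverse to both $X$ and $Y$ near $x_0$, so $X\cap W$ is a $C^2$ curve through $x_0$, $\dim(Y\cap W)=\dim(X\cap W)+1$, both sets remain definable, and $(Y\cap W,X\cap W)$ is Whitney $(b)$-regular at $x_0$ because the Whitney conditions are stable under transverse intersection. By the one-dimensional case, the section $(X\cap W)\cup(Y\cap W)$ is, near $x_0$, a finite union of $C^1$ manifolds with boundary, each with boundary $X\cap W$ (this is the finite-union form of Paw\l ucki's conclusion, needed in case $Y\cap W$ has several local branches at $x_0$). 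It then remains to pass from generic transverse sections back to $X\cup Y$ itself: concretely, to show that the Gauss map $y\mapsto T_yY$ extends continuously to each $x\in X$, which by $(a)$- and $(b)$-regularity reduces to pinning down the limit secant directions -- information carried by the transverse sections -- while the connectedness of $Y$, together with definability, rules out extra local branches and produces a single $C^1$ manifold with boundary rather than a finite union.

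I expect this last descent step for $\dim X>1$ to be the main obstacle, since ``being a $C^1$ manifold with boundary'' is not read off generic plane sections in a purely formal way, and the argument must go through the continuity of the Gauss map, equivalently through normal pseudo-flatness and the preceding proposition. An alternative, avoiding slicing altogether, would be to establish directly -- as a polynomially bounded analogue of Hironaka's theorem -- that $(b)$-regularity of a definable codimension-one stratum implies normal pseudo-flatness, and then invoke the preceding proposition; but this appears to require essentially the same analysis of the limiting normal cone $C_XY$.
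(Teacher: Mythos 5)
Your treatment of the case $\dim X=1$ is essentially the paper's argument: $(b)\Rightarrow(r)$ by Proposition 3.1, then $(r)$ (i.e.\ $(a)$ plus $t\,r(t)\to 0$) together with polynomial boundedness gives $r(t)\lesssim t^{\alpha}$ with $\alpha>-1$, hence $\rb$, and Corollary 3.10 (equivalently Theorem \ref{thm_ppn} plus Proposition 3.9) finishes. That part is fine.

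The gap is the case $\dim X\geq 2$, which the statement allows and which you do not close. Your plan is to slice by a generic affine $W$ transverse to $X$ of codimension $\dim X-1$, apply the one-dimensional case to $(Y\cap W,X\cap W)$, and then ``descend''; but, as you yourself observe, being a $C^1$ manifold with boundary is not read off generic transverse sections, and the continuity of the Gauss map for the unsliced pair is precisely what is left unproved. The paper avoids this descent entirely: the reduction to one-dimensional $X$ (via the argument of \cite{ot2} showing $(b)$ implies $(npf)$ in the subanalytic case) is used \emph{only} to establish normal pseudo-flatness of $(Y,X)$ --- a condition on the normal cone that is detected on such slices --- by combining Proposition 3.1, the implication $(r)\Rightarrow\rb$ in the polynomially bounded case, and Theorem \ref{thm_ppn}. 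The passage from there to the $C^1$ conclusion is then done by Proposition 3.9, which is stated and proved for $X$ of arbitrary dimension and whose hypotheses are exactly $(b)$ and $(npf)$ for the full pair; its proof is where the uniqueness of the limiting secant and tangent plane at each point of $X$, and their continuity, are obtained. So the ``alternative'' you mention at the end (prove $(npf)$ and invoke the preceding proposition) is in fact the paper's route, and it does not require any normal-cone analysis beyond Theorem \ref{thm_ppn} and Proposition 3.9; as written, however, your proposal neither carries out that route nor completes the descent from slices, so it is incomplete for $\dim X\geq 2$. (A further caveat: connectedness of $Y$ alone does not rule out several local branches of $Y$ at a point of $X$; the local connectedness needed here is exactly the hypothesis built into Proposition 3.9, so it should be invoked, not replaced by a global connectedness remark.)
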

\begin{proof}
By the proof of \cite{ot2} that $(b)$ implies $(npf)$ in the subanalytic case, one may reduce to the case of $1$-dimensional $X$. But then $(r)$ holds, by Proposition 3.1. By Theorem \ref{thm_ppn}, $X \cup Y$ is normally pseudo-flat, because in the polynomially bounded case $(r)$ implies $\rb$.  The result follows from the preceding proposition.
\end{proof}

Corollary 3.11 is also a generalisation of Paw\l ucki's theorem \cite{pa}.

\section{Paw\l ucki's example.}

In this section we  study  geometric properties of the closure $S_g$ of the graph in ${\bf R}^3$ of the function

 $$g(x,z) = \exp((x^2 + 1) \ln (z)) = z^{x^2 + 1}, \,\, z > 0.$$

\noindent  and compare these with the geometric properties of the closure $S_f$ of the graph of a  function previously studied by the first author and L. Wilson,

 $$f(x,z) = z - \frac{z}{ \ln (z)} \ln (x + \sqrt{x^2 + z^2}), \,\, z > 0.$$

The  natural  stratification with two strata of $S_f$  was shown by the first author and Wilson in \cite{tw} to be a counterexample in o-minimal geometry to several statements known to be true for sub-analytic sets, for example that  Kuo's $(r)$-regular stratified sets \cite{k} are normally pseudo-flat (proved by Orro and the first author in \cite{ot3}), and satisfy $(r^*)$-regularity (proved by  Navarro Aznar and the first author in \cite{nt}).

The set $S_g$ was used in a 1985 paper by Paw\l ucki in \cite{pa} as a counterexample to a possible generalisation of his useful and striking theorem concerning sub-analytic stratified sets with a smooth singular locus of codimension $1$:  such a stratified set is Whitney $(b)$-regular if and only if it is locally a finite union of $C^1$ manifolds with boundary (equal to the singular locus).  For the natural two-strata stratification of $S_g$,  $(b)$-regularity holds and the set is homeomorphic to a closed half-plane, however $S_g$ is {\it not} a $C^1$ manifold with boundary. We shall show here that the graph of $g$ is also a counterexample to the geometric statements proved for sub-analytic sets  in \cite{ot3}, as well as having worse properties than the graph of $f$. For example although the graph of $g$ is Whitney $(b)$-regular over the $1$-dimensional stratum $Ox$, it does not satisfy Kuo's ratio test $(r)$, providing the first such example in $o$-minimal geometry. Kuo \cite{k} proved that no such example exists among semi-analytic stratified sets, and the same proof is valid for subanalytic stratified sets. In Proposition 3.1 below we give a proof for definable sets in any polynomially bounded o-minimal structure.  Moreover we show that $S_g$  can be used to define the first example of a definable stratification in an o-minimal structure which is Whitney $(b)$-regular but whose density (as defined by  Kurdyka and  Raby \cite{kr}, generalising the Lelong number \cite{le}) is not continuous along strata.

 Note that by Miller's dichotomy \cite{m} these examples exist in {\it every} o-minimal structure which is not polynomially bounded.

In $\bf R^{3}$ denote by  $Y$ the graph of the function $f(x,z)$,
for $z > 0$ and $x$ and $z$ small,
and  denote by $W$ the graph  of the function $g(x,z)$,
for $z > 0$ and $x$ and $z$ small.
Denote the $x$-axis by $X$.

\bigskip

\begin{remark} {\it $f(x,z) = - f(-x,z)$, i.e. $f$ is an odd function of $x$, while $g(x,z)$ is obviously an even function of $x$.}
\end{remark}
%{\bf Proof.} $f(x, z) + f(-x, z) = 2z -  \frac{z}{ln z} [ln(x +\sqrt{x^2 + z^2}) + ln(-x + \sqrt{x^2 + z^2})]$

%$= 2z - \frac{z} {ln z} [ln(x^2 + z^2 - x^2)] = 2z - \frac{z.2 lnz}{ln z} = 0.$ \qed

\medskip

\begin{remark} {\it $X \subset \overline{Y}$, because $\lim_{z
\rightarrow 0} f(x,z) = 0.$}
\end{remark}
{\bf Proof.} Obviously,
$$\lim_{z \rightarrow 0} z = 0, \rm{and} \quad \lim_{z
\rightarrow 0} {\frac{1}{\ln z} } = 0.$$

If $x > c > 0$, then $\vert \ln (x + \sqrt{x\sp{2} + z\sp{2}}) \vert
< \vert \ln
(2c)\vert$, so that
$$\lim_{z \rightarrow 0} z \,\ln (x + \sqrt{x\sp{2} + z\sp{2}}) = 0.$$

By remark 1 we do not need to study the case of $x < 0$.

If both $x$ and $z$ tend to $0$, consider the two cases :

(i)  ${\frac{ z }{ x }} \rightarrow 0.$ Then
$$\vert z \,\ln (x + \sqrt{x\sp{2} + z\sp{2}}) \vert < \vert z \, \ln
(2x) \vert <
\vert x \, \ln (2x) \vert \rightarrow 0 \quad \rm{as} \quad x
\rightarrow 0,$$

(ii) $\frac{ x }{ z }$ is bounded. Then

$$\vert z \,\ln (x + \sqrt{x\sp{2} + z\sp{2}}) \vert < \vert z
\,\ln z \vert
\rightarrow 0 \quad \rm{as} \quad z \rightarrow 0.$$

\medskip
Thus  $X \subset \overline{Y}$.
Clearly  $\lim_{z \rightarrow 0} g(x,z) = 0,$ so that also $X \subset \overline{W}$.
\qed

\medskip
Consider the closed stratified set $S_f$ with two strata $(Y,X)$,
and the closed stratified set $S_g$ with two strata $(W,X)$.

\medskip
In \cite{tw} the following five properties were shown to hold for $S_f$ stratified by $(Y,X)$.

 \medskip

{\bf (1) $(n)$ and $(npf)$ fail for $(Y,X)$ at $(0,0,0)$,

(2) Kuo's ratio test $(r)$ holds for $(Y,X)$ along $X$,

(3) Whitney's condition $(b)$ holds for $(Y,X)$ along $X$,

(4) $(b^*)$ and $(r^*)$ fail for $(Y,X)$ along $X$ at $(0,0,0)$,

(5) the density of $S_f$ is constant, hence continuous, along $X$.}

\medskip

Note that by (1) and (3), a general theorem of \cite{ot1} stating that $(w + \delta)$ implies $(n)$ and $(npf)$, implies in turn that the Kuo-Verdier condition $(w)$ fails for $(Y,X)$ at $(0,0,0)$. Here $(\delta)$ refers to the weak Whitney condition introduced by  Bekka and the first author (see \cite{bt1} and \cite{bt2}), which follows from $(b)$ (as its name suggests).

Note also that (1) and (3) show that $S_f$ provides another (complicated) counterexample to Paw\l ucki's Theorem 0.1 above.

\medskip
We shall prove the following five properties for $S_g$ stratified by $(W,X)$.

{\bf (1) $(n)$ and $(npf)$ fail for $(W,X)$ at $(0,0,0)$,

(2) Kuo's ratio test $(r)$ fails for $(W,X)$ along $X$ at $(0,0,0)$,

(3) Whitney's condition $(b)$ holds for $(W,X)$ along $X$,

(4) $(b^*)$ and $(r^*)$ fail for $(W,X)$ along $X$ at $(0,0,0)$,

(5) while the density of $S_g$ is constant along $X$ at $(0,0,0)$, the density of the $3$-dimensional stratified set defined by the convex hull of $S_g$ and the upper half plane $\{ y = 0, z > 0 \}$ is not continuous along $X$ at $(0,0,0)$.}

\medskip

In particular Property 5 for $S_g$  shows that the theorems of Comte [C] and of the second author \cite{va1,va2},  proved for subanalytic sets, do not hold for general o-minimal structures. This is a surprise:  it gives the first counterexample to continuity of the density along strata of Whitney regular stratified sets definable in some o-minimal structure. This answers negatively a question posed explicitly by the first author and L. Wilson on page 464 of \cite{tw}.

\bigskip

{\bf Property 1.} {\it $(n)$ and $(npf)$ fail for $S_g = W \cup X$ at $(0,0,0)$. }

{\bf Proof. }

We will show that the limits of secants from $(x,0,0)$ to $(x, g(x,z),z)$ as
$(x,z)$ tends to $(x_0,0)$ are the straight lines which in the
$(y,z)$-plane have equations

$ y = 0 \quad {\rm {if}}\quad x_0 > 0$

  $y={\sigma}z \, \,{\rm{for \,\,all}} \, \,
\sigma \in [0,1] \, \, \quad {\rm {if}}\quad x_0 = 0
\quad
(1.1)$

  $y=0 \quad \,{\rm {if}}\quad x_0 < 0.$

However, for the secants from $(0,0,0)$ to $(0,f(0,z),z)$ as $z$ tends to $0$,
the limiting secant is $y = z$.
Hence $(n)$ fails (the tangent cone to $C_0(S_0)$ does not
equal the fibre at $0$ of the normal cone). Moreover $(npf)$ fails
since for $x_0 \neq 0$ the fibre
at $x_0$ of the normal cone is $0$-dimensional, while the fibre at
$0$ is $1$-dimensional.

\medskip

{\it Proof of (1.1).}
First observe that, for all $0<z<1$, the secant from $(0,0,0)$ to
$(0, g(0,z),z)$ has slope
  $${\frac{g(0,z)}{z}} = 1.$$

Take $x_0 > 0$ and
let $(x,z)$ tend to $(x_0,0)$. The slope  of the secant from
$(x,0,0)$ to $(x,g(x,z),z)$ is
$${\frac{g(x,z)}{z}} = z^{x^2} = exp(x^2 \ln z)$$

\noindent which tends to $0$ as
$z$ tends to $0$ and $x$ tends to $x_0$.

By symmetry (Remark 1), when $x_0 < 0$ the limiting slope is also $0$.

Now suppose $(x,z)$ tends to $(0,0)$.

By symmetry (Remark 1 again) it will be enough to study the case $x > 0$ and
to show that all the values $\sigma \in [0,1]$ are realised. So we
must show that the limits of
$z^{x^2}ñ $ take all
  values in $[0,1]$ as $x$ and $z$ tend to $0$ when $x > 0$.

Let $\sigma \in (0,1)$.

 On the curve $z = \exp({\frac{\ln \sigma}{x^2}})$, i.e. $x^2 \ln z = \ln \sigma$,
$\exp(x^2 \ln z) = \sigma$ so in particular the limit as $(x,z)$ tends to $(0,0)$ is $\sigma$.

On the curve $z = \exp({\frac{-1}{x^3}})$,  $\exp(x^2 \ln z) = \exp({\frac{-1}{x}})$ with limit $0$ as $x$ tends to $0$.

This completes the proof of (1.1), and hence the proof of Property 1. \qed

\medskip
Next we shall study Property 2, which is Kuo's ratio test $(r)$. We show that $(r)$ fails for $(W, X)$ at $(0,0,0)$, the condition failing along flat curves of the form
 $z = \exp({\frac{-C}{x^2}})$. That such an example exists is surprising, because we shall see below that Property 3 -  Whitney's condition $(b)$ - does hold, and this is enough to ensure $(r)$ in the subanalytic case along strata of dimension one, as was first shown in 1970 by Tzee-Char Kuo \cite{k}.   Along strata of  higher dimension it is not the case that $(r)$ follows from $(b)$-regularity as was illustrated by the semi-algebraic examples constructed by the first author  during the Nordic Summer School at Oslo in August 1976 \cite{t1}. Real algebraic examples  were given in the first author's 1977 Warwick thesis \cite{t2} and can be found with other real algebraic examples in a joint paper by the first author with Brodersen \cite{bt}. The systematic calculations of $(b)$-regularity by the first author in his 1977 thesis \cite{t2}, completed in  \cite{Paris7} and of $(w)$-regularity in the 1996 thesis of Noirel \cite{n} provide infinitely many real algebraic examples.
 
 \bigskip

{\bf Property 2.} {\it $(r)$ fails for the pair of strata $(W,X)$ at $(0,0,0)$. }

{\bf Proof.}

Recall that Kuo's ratio test $(r)$ holds when
  $$R(x,y,z) = {\frac{\vert(x,y,z)\vert.
\delta(T_{(x,0,0)}X,T_{(x,y,z)}W)}{\vert\vert(y,z)\vert\vert}} \rightarrow 0$$

as $(x,y,z)$ tends to $(0,0,0)$ on $W$.

$$\hskip-38mm{\rm{Now,}} \quad \delta(T_{(x,0,0)}X, T_{(x,y,z)}W) = \quad
 \frac{\vert \frac{\partial g}{\partial x}\vert}{\vert\vert (
\frac{\partial g}{\partial x}, -1,
\frac{\partial g}{\partial z}) \vert\vert}
<  \vert  \frac{\partial g}{\partial x} \vert . $$

Calculating, using $g = \exp((x^2 + 1) \ln z)$,

 $$ {\vert {\frac{\partial g}{\partial x}}\vert} = \vert 2x.\ln z. \exp((x^2 + 1) \ln z) \vert = \vert 2x.z.\ln z. \exp(x^2.\ln z)\vert ,$$

 \noindent  which tends to zero as $z$ tends to $0$, because $z .ln z$ tends to $0$ as $z$ tends to $0$, while $exp(x^2. lnz)$ is bounded above by 1 as $x$ and $z$ tend to $0$.
Further,

 $$ {\vert {\frac{\partial g}{\partial z}}\vert} = \exp(x^2 \ln z),$$

 \noindent  which is bounded between $0$ and $1$ as $x$ and $z$ both tend to zero.

$$\hskip-38mm{\rm{Thus,}} \qquad R(x,y,z) \approx
{\frac{\vert {\frac{\partial g}{\partial x}}\vert .\vert\vert (x,y,z)
\vert\vert}{\vert\vert (y,z) \vert\vert } } \approx {\frac{\vert {\frac{\partial
g}{\partial x}}\vert }{\vert z \vert
}}.\sqrt{x\sp{2} + z\sp{2}} $$

$$  \approx \vert x .z.  \ln z . \exp(x^2  \ln z) .\frac{\sqrt{x\sp{2} + z\sp{2}}}{z}\vert $$

$$  = \vert x .\ln z . \exp(x^2  \ln z) .{{\sqrt{x\sp{2} + z\sp{2}}}}\vert.$$

\medskip

We need to check whether this tends to zero as $z$ tends to $0$.

If $z$ dominates $x$,  then because $z.\ln z$ tends to zero as $z$ tends to zero, we can deduce that $R(x, y ,z)$ also goes to zero.

There remains the case where $x$ dominates $z$ to consider.
In this case $R(x,y,z)$ will be equivalent to $x^2\ln z\exp(x^2 \ln z)$.
Because $x^2 \ln z$ has values running from $0$ to $- \infty$ as $x$ and $z$ tend to zero, we need to study the function $w \exp(w)$ for $w \in ] - \infty, 0]$. This takes the value $0$ when $w = 0$, and also tends to $0$ as $w$ tends to $- \infty$. However we can also choose $x$ and $z$ tending to $0$ so that $w = x^2 \ln z $ tends to a constant $-C$, $C > 0$. In particular one may find $x$ and $z$ tending to zero on the curve $x^2 \ln z = - C$, i.e. $z = \exp(- C/ x^2)$. Such a curve is flat, tangent to the $x$-axis, and the associated curve

$(x, \exp((x^2 + 1) \ln z), z) = (x, \exp(- C (x^2 + 1)/ x^2), \exp( -C/x^2))$

$\hskip37mm= (x, e^{-C} e^{-C/x^2}, e^{-C/x^2})$

\noindent  lies on $W$. The limit of $R$ restricted to such a curve is clearly equivalent to $- C \exp(- C)$, so that the ratio test $(r)$ will fail to hold at $(0,0,0)$ for the pair $(W,X)$. \qed

\bigskip
{\bf Note.} Condition $(a)$ holds.
As above, $d(T_{(x,0,0)}X,
T_{(x,y,z)}W) <  \vert
{\frac{\partial g}{\partial x}}\vert , $
and we saw that $\vert
{\frac{\partial g}{\partial x}}\vert $ tends to $0$ as $z$ tends to $0$, giving $(a)$-regularity.

\bigskip

By the main theorem of \cite{ot2}, the condition
$(r^e)$ (defined in \cite{ot2}) must fail for $(W,X)$ at the origin, because $(a)$ holds, while $(n)$ and $(npf)$ fail.

\bigskip

{\bf Property 3.} {\it $(b)$ holds for $(W,X)$ at $(0,0,0)$.}

\medskip

{\bf Proof}.  We have just seen that $(a)$ holds. Thus we need only
prove that $(b^{\pi})$ holds.

By remark 1, we need only treat the case $x \geq 0$.

Suppose $0 < z < 1$, and $0 \leq x$, for $x$ small.

To prove that Whitney $(b^{\pi})$ holds at $(0,0,0)$ we
must show that

$$Q(x,z) =  {\frac{-y + zg_z}{\vert\vert(y,z)\vert\vert. \vert\vert (g_x, -1, g_z) \vert\vert}}$$

\noindent tends to zero as $(x,z)$ tends to $(0,0)$.

Now $Q(x,z) = {\frac{-y + z^{x^2+1}(x^2+1)}{\vert\vert(y,z)\vert\vert . \vert\vert(g_x,-1,g_z)\vert\vert}}  = {\frac{x^2 z^{x^2+1}}{\vert\vert(y,z)\vert\vert . \vert\vert(g_x,-1,g_z)\vert\vert}}$

\noindent which is equivalent to $x^2 z^{x^2}$, which tends to zero as $x$ and $z$ tend to $0$.

This
implies that $(b^{\pi})$
holds, and hence that $(b)$ holds for $(W,X)$ on a neighborhood of
$(0,0,0)$ in $X$. \qed

\bigskip

{\bf Property 4.} {\it $(b^*)$ and $(r^*)$ fail for $(W,X)$ at $(0,0,0)$. }

{\bf Proof.}
We intersect $W$ with planes $\{y = az, 0 <a<1\}$ to
  obtain $$ az = \exp((x^2 + 1) \ln z) = z z^{x^2},$$
 which becomes
$$a = \exp(x^2 \ln z) .$$

Hence $\ln a = x^2  \ln z$, $x^2 = {\frac{\ln a}{\ln z}}$, and $z = e^{{\frac{\ln a}{x^2}}}$.

This curve in the $xz$-plane passes through $(0,0)$ if $0 < a < 1$,
 since $Y \cap \{y =
az\}$ contains curves passing
through $(0,0,0)$.  It follows that  $(W\cap\{y=az\}, X)$ cannot be
$(b)$-regular,  and  then by definition $(W,X)$ is not $(b_{cod \,1})$-regular and $(b^*)$ fails.
Note that $(a_{cod 1})$ holds for $(W,X)$, as does $(a)$, and thus $(a^*)$ holds.
 However $(r^*)$ fails because $(r)$ fails at $(0,0,0)$. \qed

\bigskip

{\bf Property 5.} {\it While the density of $S_g$ is continuous along $X$ at $(0,0,0)$, the density of the $3$ dimensional stratified set $K_g$, defined by the convex hull of the union of $S_g$ with the upper half plane $y = 0, z > 0$, is not continuous along $X$ at $(0,0,0)$.}

{\bf Proof.}
By the proof of Property 1, together with $(a)$-regularity, we see that  the pure tangent cone at each $(x_0,0,0)$, where $x_0 \neq 0$, is $\{y = 0, z \geq 0 \}$. The density of $S_g$ at such points is thus $1/2$ according to the definition and results of Kurdyka and Raby \cite{kr}, while the density of $K_g$ at such points on $X$ is $0$.

The set of limiting secants to $S_g$ from the origin $(0,0,0)$ (defining the tangent cone to $S_g$ at $(0,0,0)$)  is precisely  $\{ y= z \}$, so that by the formula for the density of Kurdyka and Raby in terms of the pure tangent cone \cite{kr}, we see that the density at the origin of $S_g$ is $1/2$, while the density at the origin  of $K_g$ is  $1/8$ (the area $\pi/8$ of the part of the sector between $y = 0$ and $y = z$ inside the unit ball centred at the origin, divided by the area $\pi$ of the unit disk). It follows that the density of $S_g$ is constant along $X$, and the density of $K_g$ along $X$ jumps at $(0,0,0)$.
 \qed

\bigskip
\begin{remark}The natural stratification of $K_g$ by dimension is Whitney $(b)$-regular, with one connected stratum of dimension $3$, two strata of dimension $2$ - the graph of $g$ and the upper half-plane $\{ y = 0, z >0 \}$ - and a single stratum of dimension $1$, namely the $x$-axis. Using the same technique as above to obtain a $3$-dimensional stratified set $K_f$ equal to the convex hull of the union of $S_f$ and the upper half-plane $\{ y = 0, z >0 \}$, we find that its natural stratification by dimension has two strata of dimension $1$, namely  the $x$-axis and the half-line $\{ y=0,x=0\}$, so forcing the origin to be a stratum. Hence the density of $K_f$ is continuous along strata.
\end{remark}

%    Bibliographies can be prepared with BibTeX using amsplain,
%    amsalpha, or (for "historical" overviews) natbib style.
%\bibliographystyle{amsplain}
\bibliographystyle{amsalpha}
%    Insert the bibliography data here.

\end{document}